\theoremstyle{theorem}
\newtheorem{theorem}{Theorem}
\theoremstyle{theorem}
\newtheorem{lemma}{Lemma}
\theoremstyle{theorem}
\newtheorem{corollary}{Corollary}
\theoremstyle{theorem}
\newtheorem{proposition}{Proposition}
\theoremstyle{theorem}
\newtheorem{conjecture}{Conjecture}
\theoremstyle{definition}
\newtheorem*{remark}{Remark}
\DeclareMathOperator{\ord}{ord}
\begin{document}

\title[Certain equations of Erd\H{o}s--Moser type]{On the unsolvability of certain equations of Erd\H{o}s--Moser type}

\author{Ioulia N. Baoulina}

\address{Department of Mathematics, Moscow State Pedagogical University, Krasnoprudnaya str. 14, Moscow 107140, Russia}
\email{jbaulina@mail.ru}

\date{}

\maketitle

\thispagestyle{empty}

\section{Introduction}

For positive integers $k$ and $m$, let $S_k(m):=\sum_{j=1}^{m-1}j^k$ be the sum of $k$th powers of the first $m-1$ positive integers. In 2011, Kellner~\cite{K3} conjectured that for $m>3$, the ratio $S_k(m+1)/S_k(m)$ cannot be an integer. Since $S_k(m+1)=S_k(m)+m^k$, one can reformulate Kellner's conjecture as follows: for any positive integer $a$, the equation $aS_k(m)=m^k$ has no solutions $(m,k)$ with $m>3$. The special case $a=1$ of this conjecture is called the Erd\H{o}s--Moser conjecture. It was proposed around 1950 by Paul Erd\H{o}s in a letter to Leo Moser. Moser~\cite{Moser} proved that if $(m,k)$ is a solution of $S_k(m)=m^k$ with $m>3$, then $m>10^{10^6}$. As another supporting fact of the Erd\H{o}s--Moser conjecture, we mention the best known lower bound $m > 2.7139 \cdot 10^{\,1\,667\,658\,416}$, which is due to Gallot, Moree and Zudilin~\cite{GMZ}. 

While the Erd\H{o}s--Moser conjecture remains open, the unsolvability of the equation $aS_k(m)=m^k$ (which we call the Kellner--Erd\H{o}s--Moser equation) was recently~\cite{BM} established for many integers $a>1$. In particular, it was shown \cite[Theorem~1.5]{BM} that if $a$ is even or $a$ has a regular prime divisor or $2\le a\le 1500$, then the equation $aS_k(m)=m^k$  has no solutions $(m,k)$ with $m>3$.

For positive integers $k$ and $m$, let $T_k(m):=\sum_{j=1}^{m} (2j-1)^k$ be the sum of $k$th powers of the first $m$ odd positive integers. A natural question about $T_k(m)$ is whether there are positive integers $k$ and $m>1$ such that $T_k(m+1)/T_k(m)$ is an integer? Since $T_k(m+1)=T_k(m)+(2m+1)^k$, the question can be reformulated as follows: Do there exist positive integers $a$, $k$ and $m>1$ such that ${aT_k(m)=(2m+1)^k}$?

In this paper, we investigate the solvability in positive integers $k$ and $m$ of the equation $aT_k(m)=(2m+1)^k$, where $a$ is a fixed positive integer. In order to do this, we introduce a new type of \emph{helpful pairs}, which provides an important tool for establishing the unsolvability of the equation $aT_k(m)=(2m+1)^k$. The key observation is that for both equations $aS_k(m)=m^k$ and $aT_k(m)=(2m+1)^k$ the same set of helpful pairs can be used. Combining this with some known results on the Kellner--Erd\H{o}s--Moser equation \cite{BM} leads us to our main theorem.
\begin{theorem}
\label{t1}
If $a$ is even or $a$ has a regular prime divisor or $2\le a\le 1500$, then the equation $aT_k(m)=(2m+1)^k$  has no solutions $(m,k)$ with $m>1$.
\end{theorem}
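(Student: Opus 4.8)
The plan is to mirror the treatment of the Kellner--Erd\H{o}s--Moser equation $aS_k(m)=m^k$ in \cite{BM} and to reduce Theorem~\ref{t1} to \cite[Theorem~1.5]{BM} by showing that the helpful pairs which preclude solutions of $aS_k(m)=m^k$ also preclude solutions of $aT_k(m)=(2m+1)^k$. Before invoking that machinery I would clear away two cases by parity. Since $2m+1$ is odd, the right-hand side $(2m+1)^k$ is always odd; if $a$ is even then $aT_k(m)$ is even, so the equation is impossible. This disposes of the even case outright --- notably, a case that was substantive for $S_k$ becomes trivial for $T_k$. It thus remains to treat odd $a$, for which a solution forces $T_k(m)$ to be odd, and hence (as $T_k(m)\equiv m\pmod 2$) forces $m$ to be odd.

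Next I would show that $k$ must be even. Reindexing $j\mapsto m+1-j$ gives $2T_k(m)=\sum_{j=1}^{m}\bigl[(2j-1)^k+(2m-(2j-1))^k\bigr]$, and reducing modulo $2m$ yields $2T_k(m)\equiv(1+(-1)^k)T_k(m)\pmod{2m}$. If $k$ were odd this would give $m\mid T_k(m)$, whence $(2m+1)^k=aT_k(m)\equiv 0\pmod m$; but $2m+1\equiv 1\pmod m$ forces $(2m+1)^k\equiv 1\pmod m$, so $m\mid 1$, contradicting $m>1$. Hence $k$ is even, exactly as in the $S_k$ setting.

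The core of the argument is the transfer of helpful pairs. The key arithmetic fact is that for an odd prime $p$ any block of $p$ consecutive odd integers runs through every residue class modulo $p$ exactly once, since $\gcd(2,p)=1$. Writing $m=pq+s$ with $0\le s<p$, this gives $T_k(m)\equiv q\,c_p+T_k(s)\pmod p$, where $c_p:=\sum_{r=1}^{p-1}r^k$ equals $-1$ if $(p-1)\mid k$ and $0$ otherwise --- the very same $c_p$ that governs $S_k$ modulo $p$, where $S_k(m)\equiv q'c_p+S_k(s'+1)\pmod p$ under $m-1=pq'+s'$. Consequently the dependence of $T_k(m)\bmod p$ on $k\bmod(p-1)$ and on $m\bmod p$ is structurally identical to that of $S_k$, so the congruence restriction extracted from a helpful pair for $aS_k(m)=m^k$ carries over to $aT_k(m)=(2m+1)^k$. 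I would therefore introduce the analogous notion of helpful pair for the latter equation, prove this transfer lemma, and then import \cite[Theorem~1.5]{BM}: for $a$ with a regular prime divisor and for odd $a$ with $3\le a\le 1500$, the same set of helpful pairs that makes $aS_k(m)=m^k$ unsolvable makes $aT_k(m)=(2m+1)^k$ unsolvable. (One might instead hope to exploit the identity $S_k(2m+1)=T_k(m)+2^kS_k(m+1)$, but this does not transform the equation itself, so the passage must be made at the level of congruences.)

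The main obstacle is ensuring that the transfer is faithful in every detail. Two features demand care. First, the target $M=2m+1$ is odd rather than a generic integer, so the prime $2$ sits outside the helpful-pair mechanism and must be handled separately --- which is exactly where the clean disposal of the even-$a$ case and of odd $k$ originates. Second, when $(p-1)\mid k$ the term $q\,c_p$ survives, so one must track $q\bmod p$, equivalently $m\bmod p^2$, and verify that the residue of $M$ forced by the congruence corresponds correctly to the residue of $m$ forced in the $S_k$ analysis; getting this dictionary precisely right is the delicate point. Finally, since $m$ is odd and $m>1$, the smallest admissible value $m=3$ (and any further values lying in the gap between the hypothesis $m>1$ here and $m>3$ in \cite[Theorem~1.5]{BM}) should be checked directly to complete the argument.
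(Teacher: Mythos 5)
Your outline gets several pieces right and they match the paper: the even-$a$ case is indeed dispatched by parity ($aT_k(m)$ even versus $(2m+1)^k$ odd); odd $k$ is eliminated because $m\mid T_k(m)$ while $(2m+1)^k\equiv 1\pmod m$; and your central observation --- that $p$ consecutive odd integers run through all residues modulo an odd prime $p$, so congruences for $T_k$ modulo $p$ are equivalent to congruences for $S_k$ --- is exactly the idea behind the paper's Lemma~\ref{l8}, which is proved via $2^tS_t\bigl(x+\frac{q+1}2\bigr)\equiv T_t(x)\pmod q$ and yields that $(t,q)_a$ is a helpful pair for $aS_k(m)=m^k$ if and only if it is one for $aT_k(m)=(2m+1)^k$. (Your worries at the end are largely spurious: helpful pairs $(t,q)_a$ require $2\le t\le q-3$, so the case $(q-1)\mid k$ never enters that mechanism, and there is no ``dictionary'' between the $m$-ranges of the two equations to verify, nor anything special about $m=3$.)

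The genuine gap is that the congruence transfer alone does not prove the theorem; you are missing the analogue of \cite[Theorem~3.11]{BM}, namely Theorem~\ref{t3} of the paper. Two places break without it. First, the regular-prime-divisor case of \cite[Theorem~1.5]{BM} is \emph{not} a helpful-pair result, so it does not ``import'' through your transfer lemma: it rests on the fact that every prime divisor of $m$ (here, of $2m+1$) must be irregular. Second, even the helpful-pair mechanism is inconclusive by itself: a helpful pair $(t,q)_a$ with $k\equiv t\pmod{q-1}$ only forces $q\mid 2m+1$, and the contradiction is then drawn from Theorem~\ref{t3}(a),(d) --- $q$ would have to be irregular with $(t,q)$ an irregular pair, contrary to the definition of a potentially helpful pair. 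Establishing Theorem~\ref{t3} is not a mod-$p$ residue-counting argument; it requires the exact identity $T_k(m)=S_k(2m)-2^kS_k(m)=\frac{2^k}{k+1}B_{k+1}\bigl(m+\frac12\bigr)$ via Raabe's multiplication formula, expanded in powers of $2m+1$ (Lemma~\ref{l5}), and the resulting Kellner-type criterion $(2m+1)^2\mid T_k(m)\iff(2m+1)\mid U_k$ (Lemma~\ref{l7}), after which one runs the Bernoulli-number/Kummer-congruence argument of \cite[Section~3]{BM} with $m$ replaced by $2m+1$. Your proposal contains no substitute for this layer, so as written it proves neither the regular-prime case nor, strictly speaking, the $2\le a\le 1500$ case.
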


Furthermore, using some known facts about the Erd\H{o}s--Moser equation \cite{K1,MRU1,MRU2}, we deduce our next theorem.

\begin{theorem}
\label{t2}
Let $k$ and $m$ be positive integers satisfying
$$
1^k+3^k+\dots+(2m-1)^k=(2m+1)^k.
$$
Then:
\begin{itemize}
\item[\textup{(a)}]  $2^8\cdot 3^5\cdot 5^4\cdot 7^3\cdot 11^2\cdot 13^2\cdot 17^2\cdot 19^2\cdot\prod_{23\le p<1000} p$ divides $k$.
\item[\textup{(b)}] Every prime divisor of $2m+1$ is greater than $10000$.
\end{itemize}
\end{theorem}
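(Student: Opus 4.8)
The plan is to dispose of the only case left open by Theorem~\ref{t1}, namely $a=1$, by extracting from a hypothetical solution the same arithmetic information that is already known for the classical Erd\H{o}s--Moser equation $S_k(n)=n^k$. The bridge between the two equations is a pairing congruence. Fix a divisor $d$ of $2m+1$. Since $2i-1\equiv-2(m-i+1)\pmod{2m+1}$, reindexing via $i\mapsto m-i+1$ gives
\[
T_k(m)=\sum_{i=1}^m(2i-1)^k\equiv(-2)^k\sum_{l=1}^{m}l^k=(-2)^kS_k(m+1)\pmod d .
\]
On the other hand the equation forces $T_k(m)=(2m+1)^k\equiv0\pmod d$, and $d$ is odd, so dividing out $(-2)^k$ yields
\[
S_k(m+1)=\sum_{l=1}^{m}l^k\equiv0\pmod d\qquad\text{for every }d\mid 2m+1 .
\]
Writing $n=m+1$, this says $S_k(n)\equiv0\pmod{2n-1}$, precisely the type of divisibility that drives the analysis of $S_k(n)=n^k$; thus the prime factors of $2m+1$ here play exactly the role of the factors of $2n-1$ in the Erd\H{o}s--Moser analysis.

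For part~(a), the bridge shows that a solution of the odd equation satisfies the same family of power-sum congruences modulo prime powers as a solution of the classical equation. As observed in the introduction, the same set of helpful pairs applies to both equations, and each such pair forces a divisibility $e\mid k$ for an explicit $e=e(p)$; since the pairs coincide, so do these conditions. Feeding the resulting congruences into the refined prime-power analysis of Kellner \cite{K1} and Moree--te Riele--Urbanowicz \cite{MRU1,MRU2}, which pins down the exact $p$-adic valuation of $k$ for each prime $p<1000$, then yields
\[
2^8\cdot 3^5\cdot 5^4\cdot 7^3\cdot 11^2\cdot 13^2\cdot 17^2\cdot 19^2\cdot\prod_{23\le p<1000}p \,\mid\, k .
\]

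For part~(b) I would argue by contradiction. Suppose a prime $p\le 10000$ divides $2m+1$. Taking $d=p$ in the bridge congruence gives $\sum_{l=1}^{m}l^k\equiv0\pmod p$, while $p\mid 2m+1$ forces $m\equiv(p-1)/2\pmod p$. Splitting the truncated sum into complete blocks of length $p$ plus the half-range tail $\sum_{l=1}^{(p-1)/2}l^k$, and invoking the constraints on $k$ from part~(a), one reduces the congruence to an explicit condition on the residues of $m$ and $\lfloor m/p\rfloor$ modulo $p$. This is precisely the condition shown to be unsatisfiable for every prime $p\le 10000$ in the computations underlying \cite{MRU1,MRU2} for the classical equation, and the same verification transfers; hence every prime divisor of $2m+1$ exceeds $10000$.

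The step I expect to be the main obstacle is guaranteeing that the bridge is tight enough in part~(b): I must check that the congruence extracted from the odd equation modulo a prime power $p^e$ is at least as strong as the one used to eliminate small primes in the Erd\H{o}s--Moser setting. This requires carefully accounting for the factor $(-2)^k$ (harmless since $p$ is odd, but it must be tracked through the higher prime-power refinements), and for the half-range power sums $\sum_{l=1}^{(p-1)/2}l^k$, whose residues modulo $p$ depend delicately on $k$ modulo $p-1$. By comparison, part~(a) is routine once the coincidence of the helpful pairs has been verified, so the bulk of the work lies in confirming that the $p$-adic analysis attached to $2m+1$ reproduces, term by term, the one already carried out for $S_k(n)=n^k$.
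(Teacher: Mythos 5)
There is a genuine gap, and it lies exactly where the paper puts its main effort. Your ``bridge'' congruence $T_k(m)\equiv(-2)^kS_k(m+1)\pmod d$ is correct, but it only holds for moduli $d$ dividing $2m+1$, whereas the divisibility constraints in part~(a) are extracted from helpful pairs $(t,q)$ for a large stock of auxiliary primes $q$ that do \emph{not} divide $2m+1$ (indeed, by Theorem~\ref{t3} every prime divisor of $2m+1$ is irregular, while most of the $q$'s used are small). So your bridge cannot deliver the statement you then invoke, namely that ``the same set of helpful pairs applies to both equations''; you cite this from the introduction, but it is precisely the content of the paper's Lemma~\ref{l8}, whose proof requires a different identity: for an arbitrary prime $q>3$ and even $t$ with $(q-1)\nmid t$ one has $2^tS_t\bigl(x+\frac{q+1}2\bigr)\equiv T_t(x)\pmod q$, which converts the condition ``$aS_t(x)\equiv x^t\pmod q\Rightarrow q\mid x$'' into ``$aT_t(x)\equiv(2x+1)^t\pmod q\Rightarrow q\mid 2x+1$'' and back. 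Your reindexing idea is in the right spirit (relate odd power sums to ordinary ones by a factor of $2$ and a shift), but it must be carried out modulo the auxiliary prime $q$, not modulo $2m+1$; without that, part~(a) is unproven.

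Part~(b) has a second, independent problem. Your reduction produces the half-range condition $\sum_{l=1}^{(p-1)/2}l^k\equiv 0\pmod p$ for $p\mid 2m+1$, and you assert that this ``is precisely the condition shown to be unsatisfiable for every prime $p\le 10000$'' in \cite{MRU1,MRU2}. It is not: the computations there eliminate small prime divisors via the irregular-pair mechanism, i.e.\ $p\mid m$ (resp.\ here $p\mid 2m+1$) forces $p$ to be irregular with $k\equiv r\pmod{p-1}$ for some irregular pair $(r,p)$ --- this comes from the second-power divisibility $(2m+1)^2\mid T_k(m)$ together with the Kellner-type Lemma~\ref{l7} and is recorded as Theorem~\ref{t3}(a),(d) --- and the helpful-pair constraints on $k$ then exclude all such $p<10000$. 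Your first-power congruence is strictly weaker than what Theorem~\ref{t3} provides, and the verification you would need (unsatisfiability of the half-range condition for all $p\le 10000$ under the divisibilities of part~(a)) has not been carried out anywhere; it does not ``transfer'' from the existing computations. The correct route is: prove Lemma~\ref{l8} with the shifted congruence modulo $q$, deduce that the $S$- and $T$-equations admit identical helpful-pair exclusions $k\not\equiv t\pmod{q-1}$, and then import the $a=1$ results of \cite{K1,MRU1,MRU2} verbatim for both (a) and (b), using Theorem~\ref{t3} for the prime divisors of $2m+1$.
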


Motivated by Theorems~\ref{t1} and \ref{t2}, we propose the following conjectures.

\begin{conjecture}
$\{T_k(m+1)/T_k(m)\mid k,m\in\mathbb Z^+, m>1\}\cap\mathbb Z=\varnothing$.
\end{conjecture}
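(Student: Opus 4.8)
The plan is to reduce the claim to the already-established unsolvability of the Kellner--Erd\H{o}s--Moser equation $aS_k(m)=m^k$ in \cite[Theorem~1.5]{BM}, by showing that a hypothetical solution of $aT_k(m)=(2m+1)^k$ satisfies exactly the congruence constraints that the helpful-pair machinery of \cite{BM} already rules out. First I would dispose of the case of even $a$ by a parity argument: since $T_k(m)=1^k+3^k+\dots+(2m-1)^k$ is a sum of odd numbers and $(2m+1)^k$ is odd, an even $a$ makes the left-hand side even while the right-hand side is odd, so no solution exists. Hence for the remaining two cases I may assume $a$ is odd, and then $(2m+1)^k=aT_k(m)$ being odd forces $T_k(m)\equiv m\pmod 2$ to be odd, so $m$ is odd as well.

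Next I would establish the foundational congruence lemma for $T_k(m)$ that underlies the helpful pairs. Write $N:=2m+1$. For an odd prime $p$, as $j$ runs over any block of $p$ consecutive integers the values $2j-1$ run over a complete residue system modulo $p$; hence the block-sum of $(2j-1)^k$ is $\equiv\sum_{r=0}^{p-1}r^k\pmod p$, which equals $-1$ when $(p-1)\mid k$ and $0$ otherwise. This is precisely the block-sum that governs $S_k$, and it is the source of the key observation that \emph{the same set of helpful pairs serves both equations}. Writing $m=qp+r$ with $0\le r<p$ therefore gives
$$
T_k(m)\equiv -q\,[(p-1)\mid k]+\sum_{j=1}^{r}(2j-1)^k\pmod p,
$$
and the analogue modulo $p^s$ follows from the same von Staudt--Clausen and Bernoulli-sum identities used in \cite{BM}, the only change being that the partial-block term is $\sum_{j=1}^{r}(2j-1)^k$ in place of $\sum_{j=1}^{r}j^k$.

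With this lemma in hand I would run the helpful-pair argument exactly as for the Kellner--Erd\H{o}s--Moser equation. If $p\mid a$ then $p\mid N^k$, so $p\mid N=2m+1$, which pins down $m\pmod p$ and, via the congruence above taken to a sufficiently high power of $p$, forces $(p-1)\mid k$ together with the Kummer-type Bernoulli congruences. For a regular prime divisor $p$ of $a$ these congruences are contradictory by the same Bernoulli-number/regularity argument as in \cite{BM}, settling the regular-prime-divisor case; and for odd $a$ with $2\le a\le1500$ the finite list of helpful pairs exhibited in \cite{BM} applies verbatim, because the block-sums coincide, ruling out those $a$ as well. In each instance the chain of helpful pairs builds up divisibility constraints on $k$ and forces $a$ to be divisible by primes it does not contain, producing the contradiction.

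The main obstacle I anticipate is verifying the exact correspondence rather than the gross structure: one must check that the partial-block terms $\sum_{j=1}^{r}(2j-1)^k$ and the substitution of the base $N=2m+1$ for $m$ do not disturb any of the specific residue conditions that the helpful pairs of \cite{BM} rely on, and in particular that the prime $2$ causes no trouble. Here the situation is actually more favorable than for $S_k$: since $N=2m+1$ is odd, $2\nmid N^k=aT_k(m)$, so the delicate $2$-adic analysis required in the Erd\H{o}s--Moser setting collapses to the single observation that $m$ is odd. The remaining work is bookkeeping: confirming that each helpful pair's congruence, when rewritten for $T_k$, still yields the same divisibility of $k$ and the same forced prime factors of $a$, so that \cite[Theorem~1.5]{BM} transfers without loss.
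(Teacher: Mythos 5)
The statement you are trying to prove is stated in the paper as a \emph{conjecture}, not a theorem, and your argument does not close the gap that keeps it open. The condition ``$T_k(m+1)/T_k(m)\in\mathbb Z$'' is equivalent to the existence of \emph{some} positive integer $a$ with $aT_k(m)=(2m+1)^k$, so to prove the conjecture you must rule out \emph{every} positive integer $a$. Your reduction leans on \cite[Theorem~1.5]{BM}, but that theorem does not assert the unsolvability of $aS_k(m)=m^k$ for all $a$: it covers only $a$ even, $a$ with a regular prime divisor, and $2\le a\le 1500$. Your three cases (parity argument for even $a$; regular prime divisor; the finite table of helpful pairs for $2\le a\le 1500$) track exactly those hypotheses and therefore reprove only Theorem~\ref{t1} of the paper. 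They say nothing about $a=1$ --- the analogue of the original Erd\H{o}s--Moser equation, for which only divisibility constraints on $k$ and $2m+1$ are known (Theorem~\ref{t2}) --- nor about odd $a>1500$ all of whose prime divisors are irregular. For such $a$ the helpful-pair machinery gives no guaranteed contradiction (as the paper notes after Proposition~\ref{p3}, the method has ``a reasonably good chance'' only when an irregular prime divides $a$, and fails entirely to conclude when $a=1$), so the conjecture cannot be established this way.

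The transfer principle you describe --- that the same helpful pairs serve both $aS_k(m)=m^k$ and $aT_k(m)=(2m+1)^k$ --- is indeed the paper's central mechanism (Lemma~\ref{l8}), and your block-sum heuristic for why $\sum(2j-1)^k$ behaves like $\sum j^k$ modulo $p$ is in the right spirit, though the paper realizes it more precisely via the identity $2^tS_t\bigl(x+\frac{q+1}2\bigr)\equiv T_t(x)\pmod q$ rather than a raw residue-system count, and it works only modulo $q$ for potentially helpful pairs $(t,q)_a$ with $(q-1)\nmid t$, not ``modulo $p^s$'' as you assert. But even granting all of that, what you obtain is Theorem~\ref{t1}, not Conjecture~1. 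The conjecture remains open, and no amount of bookkeeping over the existing helpful-pair tables will settle the cases $a=1$ or large odd $a$ with only irregular prime divisors.
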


\begin{conjecture}
$$
\left\{\left.\frac{S_k(m+1)}{S_k(m)}\,\right | k,m\in\mathbb Z^+, m>3\right\}\cap\mathbb Z=\left\{\left.\frac{T_k(m+1)}{T_k(m)}\,\right | k,m\in\mathbb Z^+, m>1\right\}\cap\mathbb Z.
$$
\end{conjecture}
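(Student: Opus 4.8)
The plan is to reduce the asserted set equality to an equivalence of solvability, one value of the common quotient at a time, and then to exploit the shared helpful-pairs machinery that underlies both Theorem~\ref{t1} and \cite[Theorem~1.5]{BM}. Since $S_k(m+1)=S_k(m)+m^k$ and $T_k(m+1)=T_k(m)+(2m+1)^k$, both quotients exceed $1$, so every integer in either set has the form $a+1$ with $a\ge 1$. An integer $n=a+1$ lies in the left-hand set precisely when the Kellner--Erd\H{o}s--Moser equation $aS_k(m)=m^k$ has a solution with $m>3$, and $n=a+1$ lies in the right-hand set precisely when $aT_k(m)=(2m+1)^k$ has a solution with $m>1$. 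Thus the conjecture is equivalent to the statement that, for every positive integer $a$, the equation $aS_k(m)=m^k$ is solvable (with $m>3$) if and only if $aT_k(m)=(2m+1)^k$ is solvable (with $m>1$).

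First I would make the dictionary between the two equations explicit through the identity
$$
T_k(m)=S_k(2m+1)-2^k S_k(m+1),
$$
which rewrites $aT_k(m)=(2m+1)^k$ as $a\bigl(S_k(2m+1)-2^k S_k(m+1)\bigr)=(2m+1)^k$. This places the odd-power equation inside the same arithmetic environment as the Kellner--Erd\H{o}s--Moser equation with modulus $2m+1$, and lets one compare $p$-adic valuations and residues of the two sides for every prime $p$.

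Next I would upgrade the paper's key observation---that a single set of helpful pairs serves both equations---from a one-way unsolvability transfer into a two-way equivalence. Concretely, a helpful pair yields, for a hypothetical solution, a system of congruence conditions on $a$, $k$, and the modulus ($m$ in one case, $2m+1$ in the other); the content to be proved is that these congruence systems coincide, so that a helpful pair rules out a solution of one equation exactly when it rules out a solution of the other. For all $a$ covered by such pairs---namely $a$ even, $a$ with a regular prime divisor, or $2\le a\le 1500$---both equations are then simultaneously unsolvable, so neither contributes the value $a+1$ to its set; this already verifies the equality on the range handled by Theorem~\ref{t1}.

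The hard part is the complementary range of $a$ (odd, with all prime divisors irregular, and $a>1500$), where no helpful pair is known to apply and where, conjecturally, both equations remain unsolvable. Here one cannot invoke emptiness of either side, so the equivalence must be argued structurally: one must show that the global constraints forced by a solution of one equation (of the kind recorded in Theorem~\ref{t2} for the case $a=1$) are self-consistent if and only if the analogous constraints for the other equation are. Establishing this bi-implication without any actual solution to transport is the central obstacle, and it is precisely this gap that keeps the statement a conjecture: a complete proof would either require showing that the helpful-pairs obstruction covers every $a$ (which is tantamount to proving both Kellner's conjecture and Conjecture~1 outright), or require a new solution-matching principle linking the two equations directly.
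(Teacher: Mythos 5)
This statement is Conjecture~2 of the paper: the paper offers no proof of it, only motivation drawn from Theorem~\ref{t1} and Theorem~\ref{t2}, so there is no proof to match your attempt against, and your attempt, to its credit, does not actually claim to close the question. What you do establish is correct and faithful to the paper's reasoning: the set equality reduces, value by value, to the assertion that for every positive integer $a$ the equation $aS_k(m)=m^k$ is solvable with $m>3$ if and only if $aT_k(m)=(2m+1)^k$ is solvable with $m>1$ (an integer ratio $n$ forces $S_k(m)\mid m^k$, resp.\ $T_k(m)\mid(2m+1)^k$, with $a=n-1$); and the two-way transfer of helpful-pair obstructions is exactly the content of Lemma~\ref{l8} together with the discussion following Lemma~\ref{l9}, which yields that for every $a$ covered by Theorem~\ref{t1} and by \cite[Theorem~1.5]{BM} both sides simultaneously omit the value $a+1$.

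The genuine gap is the one you name yourself in your last paragraph, and it is worth stating why it cannot be patched with the tools at hand. Helpful pairs transport \emph{unsolvability} between the two equations, never solutions: a pair $(t,q)_a$ excludes a residue class of $k$ for both equations at once, but if for some odd $a>1500$ with all prime divisors irregular one equation happened to have a solution, nothing in Lemma~\ref{l8} or in the identity $T_k(m)=S_k(2m+1)-2^kS_k(m+1)$ produces a solution of the other. In particular that identity does not convert $aS_k(2m'+1)=(2m'+1)^k$ into $aT_k(m')=(2m'+1)^k$, because the subtracted term $2^kS_k(m'+1)$ has no counterpart on the $S$-side; your proposed ``dictionary'' controls residues modulo primes $q$ attached to helpful pairs, not the global Diophantine equality. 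The ``structural bi-implication'' of constraints you invoke for the uncovered range of $a$ is asserted, not constructed, and consistency of two constraint systems would in any case not imply that they are solvable or unsolvable together. So your text is best read as a correct account of why the statement is plausible and why it remains open --- which is precisely the status the paper assigns it --- rather than as a proof.
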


This paper is organized as follows. In Section~\ref{s2}, we recall some properties of power sums $S_k(m)$ and prove their analogues for $T_k(m)$. In Section~\ref{s3}, we obtain analogues of Theorem~3.11 and Corollaries 3.12 and 3.13 of \cite{BM}. In Section~\ref{s4}, we define helpful pairs of the first and second kind and prove a crucial result (Lemma~\ref{l8}) that allows us to deduce Theorems~\ref{t1} and \ref{t2} as immediate consequences of the results of the previous section and some known facts about the Kellner--Erd\H{o}s--Moser equation. Some other properties of integer solutions of ${aT_k(m)=(2m+1)^k}$ are briefly discussed in Section~\ref{s5}, as well as some related
problems.

\section{Preliminary Lemmas}
\label{s2}

The following result is known as Carlitz-von Staudt's theorem \cite{C, vS} (see also \cite{M3} for a simpler proof).
\begin{lemma}
\label{l1}
Let $k$ and $m$ be positive integers. Then
$$
S_k(m)\equiv\begin{cases}
0\pmod{\frac{m(m-1)}2}&\text{if $k$ is odd},\\
-\sum_{p\mid m,(p-1)\mid k}{\frac mp}\pmod{m} &\text{if $k$ is even}.
\end{cases}
$$
\end{lemma}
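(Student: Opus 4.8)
The plan is to prove Carlitz–von Staudt's theorem by computing the power sum $S_k(m)$ modulo each prime power dividing the relevant modulus, then assembling the result via the Chinese Remainder Theorem. Let me sketch the two cases.

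Consider first the \emph{odd} case. Here the claim is that $\frac{m(m-1)}{2}$ divides $S_k(m)=\sum_{j=1}^{m-1}j^k$. The natural approach is to pair up terms symmetrically: group $j$ with $m-j$. Since $k$ is odd, $(m-j)^k\equiv -j^k\pmod m$, so $j^k+(m-j)^k\equiv 0\pmod m$, and summing over $j$ from $1$ to $m-1$ shows $m\mid 2S_k(m)$. To get the full factor $\frac{m(m-1)}{2}$ one works prime-power by prime-power. For an odd prime power $p^t\parallel m$, the pairing $j\leftrightarrow m-j$ gives $m\mid S_k(m)$ directly after checking the middle term; for the factor $m-1$ one instead pairs $j\leftrightarrow (m-1)-j$ among $j=1,\dots,m-2$, using that $j^k+((m-1)-j)^k\equiv 0\pmod{m-1}$ since $k$ is odd, leaving the single leftover term $(m-1)^k\equiv(-1)^k=-1\pmod{m-1}$ — so one must track that boundary term carefully. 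Combining $m\mid S_k(m)$ and $(m-1)\mid S_k(m)$ with $\gcd$ considerations yields divisibility by $\operatorname{lcm}$, and the factor of $2$ is absorbed because one of $m,m-1$ is even.

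For the \emph{even} case, I would fix a prime $p\mid m$, write $m=p^t m'$ with $p\nmid m'$, and evaluate $S_k(m)\bmod p$. Split the sum over residues modulo $p$: group the integers $1,\dots,m-1$ (together with the harmless extra term $m^k\equiv 0$) into complete residue classes mod $p$. For each residue $r$, there are $m/p$ terms congruent to $r$, each contributing $r^k\pmod p$, so $S_k(m)\equiv \frac{m}{p}\sum_{r=0}^{p-1} r^k\pmod p$. The inner sum $\sum_{r=0}^{p-1} r^k=\sum_{r=1}^{p-1} r^k$ is a classical power sum over $\mathbb{F}_p^\times$: it equals $-1\pmod p$ when $(p-1)\mid k$ and $0$ otherwise. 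This is the key number-theoretic input, proved by choosing a generator $g$ of $\mathbb{F}_p^\times$ and summing the geometric series $\sum_{r} r^k\equiv\sum_{i=0}^{p-2}g^{ik}$, which collapses to $-1$ or $0$ according to whether $g^k=1$. Hence $S_k(m)\equiv -\frac{m}{p}\pmod p$ exactly when $(p-1)\mid k$ and $\equiv 0$ otherwise.

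Finally I would reassemble. The previous paragraph computes $S_k(m)$ modulo each prime $p\mid m$; since we want a congruence modulo $m$, I would verify that for each $p\mid m$ the contribution $-\frac{m}{p}$ (when $(p-1)\mid k$) is consistent, noting that $\frac{m}{p}$ is divisible by $p^{t-1}$ and that distinct primes contribute to distinct summands, so the stated sum $-\sum_{p\mid m,\,(p-1)\mid k}\frac{m}{p}$ reduces correctly modulo every prime power $p^t\parallel m$. A small subtlety is the prime $p=2$, where the elementary pairing argument degenerates and one should confirm the residue-class count $\frac{m}{p}$ and the value of $\sum_{r=1}^{1} r^k$ directly. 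The main obstacle, and the step requiring the most care, is the prime-power bookkeeping in the odd case — ensuring the boundary term from the $j\leftrightarrow(m-1)-j$ pairing and the power of $2$ are handled so that divisibility by the full $\frac{m(m-1)}{2}$ (not merely $m$ or $m-1$ separately) is obtained; the even case is comparatively clean once the $\mathbb{F}_p^\times$ power-sum identity is in hand.
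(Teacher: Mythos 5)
The paper does not actually prove this lemma --- it is the classical Carlitz--von Staudt theorem, quoted from \cite{C,vS,M3} --- so there is no in-paper argument to compare against; your sketch must stand on its own, and it has two genuine problems. In the odd case your boundary term is computed incorrectly: you claim the leftover term from the pairing $j\leftrightarrow (m-1)-j$ is $(m-1)^k\equiv(-1)^k=-1\pmod{m-1}$, but of course $(m-1)^k\equiv 0\pmod{m-1}$ (you have reduced modulo $m$ instead of modulo $m-1$). A leftover of $-1$ would destroy the very divisibility you are trying to prove; the correct value $0$ rescues it. The clean packaging is to show $m\mid 2S_k(m)$ and $(m-1)\mid 2S_k(m)$ by the two pairings (writing $2S_k(m)=\sum_{j}\bigl(j^k+(m-j)^k\bigr)$ avoids any self-paired middle term), then use $\gcd(m,m-1)=1$ to get $m(m-1)\mid 2S_k(m)$, which is exactly $\frac{m(m-1)}{2}\mid S_k(m)$.

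The more serious gap is in the even case. Sorting $0,1,\dots,m-1$ into residue classes modulo $p$ and invoking $\sum_{r=1}^{p-1}r^k\equiv -1$ or $0\pmod p$ only determines $S_k(m)$ modulo $p$ for each prime $p\mid m$, hence modulo the radical $\prod_{p\mid m}p$ --- not modulo $m$, which is what the lemma asserts. When $p^t\parallel m$ with $t\ge 2$ you must compute $S_k(m)\bmod p^t$: sorting into classes modulo $p^t$ gives $S_k(m)\equiv \frac{m}{p^t}\,S_k(p^t)\pmod{p^t}$, and you then need $S_k(p^t)\equiv -p^{t-1}\pmod{p^t}$ when $(p-1)\mid k$ and $\equiv 0\pmod{p^t}$ otherwise. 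That is strictly harder than the $\mathbb{F}_p^{\times}$ geometric-series identity: it requires splitting off the multiples of $p$ (handled by induction on $t$, using $k\ge 2$), summing $u^k$ over the unit group $(\mathbb{Z}/p^t\mathbb{Z})^{\times}$ via a primitive root, and a separate check for $p=2$, $t\ge 3$, where that unit group is not cyclic. Your closing observation that $\frac{m}{p}$ is divisible by $p^{t-1}$ shows you noticed the target residue vanishes modulo $p^{t-1}$, but it does not supply the missing mod-$p^t$ computation.
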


\begin{corollary}
\label{c1}
Let $k$, $m$ and $n$ be positive integers and $p>2$ be a prime with $(p-1)\nmid k$. If $m\equiv n\pmod{p}$, then $S_k(m)\equiv S_k(n)\pmod{p}$.
\end{corollary}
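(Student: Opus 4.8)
The plan is to reduce the statement to a single congruence about a sum of $k$th powers over a complete residue system modulo $p$. By symmetry I may assume $m\le n$, and since $m\equiv n\pmod p$ I can write $n=m+tp$ for some integer $t\ge 0$. Recalling the definition $S_k(m)=\sum_{j=1}^{m-1}j^k$, the difference telescopes into a sum over $tp$ consecutive integers:
$$
S_k(n)-S_k(m)=\sum_{j=m}^{\,m+tp-1}j^k.
$$
I would then partition this range into $t$ consecutive blocks of length $p$, namely $[m,m+p-1],[m+p,m+2p-1],\dots$, and observe that the integers in each block run through a complete residue system modulo $p$. Consequently each block contributes $\sum_{r=0}^{p-1}r^k\pmod p$, and the whole difference is $t\sum_{r=0}^{p-1}r^k$ modulo $p$.

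The crux of the argument is therefore the classical fact that, under the hypothesis $(p-1)\nmid k$, one has $\sum_{r=0}^{p-1}r^k\equiv 0\pmod p$; this is precisely the step where the assumption $(p-1)\nmid k$ is used, and it is the only place it enters. To prove it I would fix a primitive root $g$ modulo $p$ (using $p>2$) and rewrite the nonzero residues as powers of $g$, turning the sum into a geometric progression:
$$
\sum_{r=1}^{p-1}r^k\equiv\sum_{i=0}^{p-2}g^{ik}\pmod p.
$$
Since $(p-1)\nmid k$, we have $g^k\not\equiv 1\pmod p$, so the progression sums to $(g^{(p-1)k}-1)/(g^k-1)$, and Fermat's little theorem gives $g^{(p-1)k}=(g^{p-1})^k\equiv 1\pmod p$, forcing the sum to vanish modulo $p$.

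Combining the two pieces, every block contributes $0$ modulo $p$, whence $S_k(n)-S_k(m)\equiv 0\pmod p$, which is the desired conclusion. The argument is elementary and I do not anticipate a genuine obstacle; the only point requiring care is the verification of the power-sum congruence, which is exactly where the hypothesis $(p-1)\nmid k$ is indispensable (and incidentally explains why the case $p=2$ is excluded, since $(p-1)\nmid k$ would then be vacuous). I would note that one could alternatively invoke Lemma~\ref{l1} to handle the even-$k$ case, but the direct primitive-root computation treats both parities uniformly and keeps the proof self-contained.
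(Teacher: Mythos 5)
Your proof is correct, but it takes a genuinely different route from the paper's. The paper assumes $m>n$, writes $S_k(m)=S_k(m-n)+\sum_{j=0}^{n-1}(m-n+j)^k$, notes that the second sum is congruent to $S_k(n)$ termwise because $m-n\equiv 0\pmod p$, and disposes of $S_k(m-n)$ by citing Lemma~\ref{l1} (Carlitz--von Staudt): since $(p-1)\nmid k$, the prime $p$ does not occur in the sum $\sum_{q\mid (m-n),\,(q-1)\mid k}(m-n)/q$ when $k$ is even (and $p$ divides $(m-n)(m-n-1)/2$ when $k$ is odd), so $p\mid S_k(m-n)$. You instead decompose the difference $S_k(n)-S_k(m)$ into $t$ blocks, each a complete residue system modulo $p$, and re-prove from scratch the classical congruence $\sum_{r=0}^{p-1}r^k\equiv 0\pmod p$ for $(p-1)\nmid k$ via a primitive root and a geometric progression. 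Both arguments rest on the same underlying fact --- the vanishing modulo $p$ of the $k$th power sum over a complete residue system --- but the paper gets it for free from Lemma~\ref{l1}, which is already in place, whereas your version is self-contained, treats both parities of $k$ uniformly, and does not need Carlitz--von Staudt at all. Either is acceptable; in the context of the paper the appeal to Lemma~\ref{l1} is the more economical choice, while your primitive-root computation would be the natural one in isolation.
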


\begin{proof}
We may assume that $m>n$. Since $(p-1)\nmid k$, Lemma~\ref{l1} yields $p\mid S_k(m-n)$. Hence $S_k(m)=S_k(m-n)+\sum_{j=0}^{n-1}(m-n+j)^k\equiv S_k(n)\pmod{p}$.
\end{proof}

Our next lemma is the analogue of Lemma~\ref{l1} for $T_k(m)$.
\begin{lemma}
\label{l2}
Let $k$ and $m$ be positive integers. Then
$$
T_k(m)\equiv\begin{cases}
0\pmod{m}&\text{if $k$ is odd},\\
(2^{k-1}-1)\sum_{p\mid (2m+1),(p-1)\mid k}{\frac {2m+1}p}\pmod{2m+1} &\text{if $k$ is even}.
\end{cases}
$$
\end{lemma}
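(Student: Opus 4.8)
The plan is to reduce the statement to the classical Carlitz--von Staudt theorem (Lemma~\ref{l1}) by expressing $T_k(m)$ through ordinary power sums. The basic device is to split the first $2m$ positive integers into the $m$ odd ones and the $m$ even ones $2,4,\dots,2m=2\cdot1,\dots,2\cdot m$; since $S_k(2m+1)=\sum_{j=1}^{2m}j^k$ and $\sum_{i=1}^m(2i)^k=2^kS_k(m+1)$, this yields the identity
$$
S_k(2m+1)=T_k(m)+2^kS_k(m+1).
$$
Everything then follows by applying Lemma~\ref{l1} to the two $S_k$ terms, once modulo $m$ (the odd case) and once modulo $2m+1$ (the even case).

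For odd $k$, I would work modulo $m$. By Lemma~\ref{l1}, $S_k(2m+1)$ is divisible by $\tfrac{(2m+1)(2m)}2=(2m+1)m$, hence by $m$; and $S_k(m+1)$ is divisible by $\tfrac{(m+1)m}2$. A short check on the parity of $m$ shows $m\mid 2^kS_k(m+1)$ in either case (when $m$ is even, the factor $2^k$ with $k\ge1$ supplies the missing factor $2$). Subtracting in the displayed identity gives $m\mid T_k(m)$. One can also avoid Lemma~\ref{l1} here by pairing the $j$th odd number with the $(m+1-j)$th, whose sum is $2m$, and using $a+b\mid a^k+b^k$ for odd $k$.

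The even case is the substantive part. Here I would work modulo $2m+1$ and exploit the symmetry $2m+1-j\equiv -j$: since $2m+1$ is odd, the involution $j\mapsto 2m+1-j$ interchanges the odd and even elements of $\{1,\dots,2m\}$, and for even $k$ their $k$th powers agree modulo $2m+1$. Hence $\sum_{\mathrm{even}}\equiv\sum_{\mathrm{odd}}=T_k(m)$, and the identity collapses to
$$
2T_k(m)\equiv S_k(2m+1)\equiv-\!\!\sum_{\substack{p\mid(2m+1)\\(p-1)\mid k}}\frac{2m+1}p\pmod{2m+1},
$$
the last step by Lemma~\ref{l1}. Writing $\sigma$ for this sum, the crux is the congruence $(2^k-1)\sigma\equiv0\pmod{2m+1}$: each prime $p$ contributing to $\sigma$ is odd and satisfies $(p-1)\mid k$, so Fermat's little theorem gives $2^k\equiv1\pmod p$, i.e.\ $p\mid 2^k-1$, whence $(2^k-1)\frac{2m+1}p=(2m+1)\frac{2^k-1}p$ is a multiple of $2m+1$. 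Summing over $p$ gives the claim. Consequently $2\bigl[(2^{k-1}-1)\sigma\bigr]=(2^k-2)\sigma=(2^k-1)\sigma-\sigma\equiv-\sigma\equiv 2T_k(m)$, and cancelling the factor $2$ (invertible modulo the odd number $2m+1$) yields $T_k(m)\equiv(2^{k-1}-1)\sigma\pmod{2m+1}$.

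I expect the only real obstacle to be the bookkeeping in the even case: inverting $2$ modulo $2m+1$ correctly and, above all, recognizing that the stated coefficient $2^{k-1}-1$ (rather than the coefficient $-1$ that appears before division by $2$) is forced by the congruence $(2^k-1)\sigma\equiv0$. The divisibility $p\mid 2^k-1$, coming directly from $(p-1)\mid k$ via Fermat, is the crucial input.
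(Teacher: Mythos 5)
Your proof is correct. It starts from the same identity $T_k(m)=S_k(2m+1)-2^kS_k(m+1)$ and the same key input (Lemma~\ref{l1}) as the paper, and the odd case is handled essentially identically. In the even case, however, your reduction is genuinely different. The paper applies the symmetry $j\mapsto 2m+1-j$ to the upper half $\{m+1,\dots,2m\}$ to obtain $S_k(2m+1)\equiv 2S_k(m+1)\pmod{2m+1}$, substitutes this back to get $T_k(m)\equiv(1-2^{k-1})S_k(2m+1)\pmod{2m+1}$, and then invokes Lemma~\ref{l1} once; the coefficient $2^{k-1}-1$ falls out immediately. You instead apply the same involution to swap the even and odd residues in $\{1,\dots,2m\}$, getting $S_k(2m+1)\equiv 2T_k(m)\pmod{2m+1}$ and hence $2T_k(m)\equiv-\sigma$, where $\sigma=\sum_{p\mid(2m+1),\,(p-1)\mid k}\frac{2m+1}{p}$; to convert $-2^{-1}\sigma$ into the stated $(2^{k-1}-1)\sigma$ you then need the extra congruence $(2^k-1)\sigma\equiv 0\pmod{2m+1}$, which you correctly derive from Fermat's little theorem and the condition $(p-1)\mid k$. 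The paper's route is shorter and requires no arithmetic beyond the substitution; yours costs one additional but genuinely interesting observation, namely that $2^{k-1}-1\equiv -2^{-1}$ modulo each prime $p$ contributing to $\sigma$ --- which in particular explains why the two seemingly different expressions for the residue of $T_k(m)$ agree.
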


\begin{proof}
First we observe that $T_k(m)=S_k(2m+1)-2^kS_k(m+1)$. Assume that $k$ is odd. By Lemma~\ref{l1}, we have $m(2m+1)\mid S_k(2m+1)$ and $m(m+1)\mid 2^kS_k(m+1)$, and thus $m\mid T_k(m)$. Now assume that $k$ is even. In this case 
$$
S_k(2m+1)=S_k(m+1)+\sum_{j=1}^m (2m+1-j)^k\equiv 2S_k(m+1)\pmod{2m+1}.
$$
Hence $T_k(m)\equiv (1-2^{k-1})S_k(2m+1)\pmod{2m+1}$, and the result follows by Lemma~\ref{l1}.
\end{proof}

The \emph{Bernoulli numbers} $B_0,B_1,B_2,\ldots$ are defined by the generating function
$$
\frac z{e^z-1}=\sum_{j=0}^{\infty} B_j\frac{z^j}{j!},\qquad |z|<2\pi.
$$
They are rational numbers satisfying the recurrence relation
$\sum_{j=0}^k \binom{k+1}jB_j=0$ ($k\ge 1$). It is easy to see that $B_0=1$, $B_1=-1/2$, $B_2=1/6$, $B_4=-1/30$, and $B_j=0$ for all
odd $j>1$. For a positive integer $k$, the $k$th \emph{Bernoulli polynomial} $B_k(x)$ is defined by
$$
B_k(x)=\sum_{j=0}^k \binom kj B_j x^{k-j}.
$$

The following lemma is a special case of Raabe's result~\cite{R}.

\begin{lemma}
\label{l3}
For any positive integer $k$,
$$
B_k(x)=2^{k-1}\left(B_k\left(\frac x2\right)+B_k\Bigl(\frac{x+1}2\Bigr)\right).
$$
\end{lemma}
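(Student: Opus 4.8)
The plan is to verify the identity at the level of exponential generating functions in an auxiliary variable $z$. Both sides are polynomials in $x$ of degree $k$, so it suffices to show that the generating functions $\sum_k(\cdot)z^k/k!$ of the two sides coincide and then compare coefficients of $z^k/k!$. First I would record the standard closed form for the generating function of the Bernoulli polynomials. Starting from the definition $B_k(x)=\sum_{j=0}^k\binom kj B_j x^{k-j}$ and the series $\frac{z}{e^z-1}=\sum_{j\ge 0}B_j z^j/j!$ supplied in the excerpt, I multiply by $e^{xz}=\sum_{i\ge 0}(xz)^i/i!$ and read off the coefficient of $z^k/k!$ via the Cauchy product, obtaining
$$\frac{z e^{xz}}{e^z-1}=\sum_{k=0}^\infty B_k(x)\frac{z^k}{k!}.$$

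Next I would assemble the generating function of the right-hand side. Writing $2^{k-1}=\tfrac12\cdot 2^k$ and absorbing the factor $2^k$ into the substitution $z\mapsto 2z$ gives
$$\sum_{k=0}^\infty 2^{k-1}\Bigl(B_k\bigl(\tfrac x2\bigr)+B_k\bigl(\tfrac{x+1}2\bigr)\Bigr)\frac{z^k}{k!}=\frac12\sum_{k=0}^\infty\Bigl(B_k\bigl(\tfrac x2\bigr)+B_k\bigl(\tfrac{x+1}2\bigr)\Bigr)\frac{(2z)^k}{k!}.$$
Applying the boxed generating function with $x$ replaced by $x/2$ and by $(x+1)/2$, and with $z$ replaced by $2z$, turns each inner sum into a closed form:
$$\frac12\left(\frac{2z e^{xz}}{e^{2z}-1}+\frac{2z e^{(x+1)z}}{e^{2z}-1}\right)=\frac{z e^{xz}(1+e^z)}{e^{2z}-1}.$$

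Finally I would factor $e^{2z}-1=(e^z-1)(e^z+1)$, cancel the common factor $1+e^z$, and recover $\frac{z e^{xz}}{e^z-1}$, which is precisely the generating function of the left-hand side. Equating coefficients of $z^k/k!$ then yields the claimed identity for every $k\ge 1$ (in fact for every $k\ge 0$). There is no genuine analytic obstacle: the series for $\frac{z}{e^z-1}$ converges for $|z|<2\pi$, so after $z\mapsto 2z$ everything is valid on $|z|<\pi$, and the manipulations are in any case legitimate as formal power series, making coefficient comparison sound.

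The only point demanding care is the bookkeeping of the scaling factor, namely matching the prefactor $2^{k-1}$ against the rescaling $z\mapsto 2z$ and correctly tracking the exponential $e^{(x+1)z}=e^{xz}e^z$, whose presence produces the numerator $1+e^z$ that cancels $e^z+1$. As an alternative I could simply cite that the statement is the $n=2$ instance of the general multiplication (Raabe) formula $B_k(nx)=n^{k-1}\sum_{r=0}^{n-1}B_k\bigl(x+\tfrac rn\bigr)$, applied after replacing $x$ by $x/2$, but the direct generating-function computation above is fully self-contained and requires no external input beyond what the excerpt already provides.
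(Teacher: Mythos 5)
Your proof is correct. The generating-function computation is sound: the identity $\frac{ze^{xz}}{e^z-1}=\sum_{k\ge 0}B_k(x)\frac{z^k}{k!}$ follows from the Cauchy product exactly as you describe, the substitution $z\mapsto 2z$ correctly absorbs the factor $2^k$, the numerator $e^{xz}+e^{(x+1)z}=e^{xz}(1+e^z)$ cancels against the factor $e^z+1$ of $e^{2z}-1$, and comparing coefficients of $z^k/k!$ (legitimate as an identity of formal power series, or of convergent series on $|z|<\pi$) gives the claim for all $k\ge 0$. Note, however, that the paper does not prove this lemma at all: it simply records it as the $n=2$ case of Raabe's multiplication theorem and cites Raabe's 1851 paper, which is precisely the alternative you mention in your last paragraph. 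So your argument is not so much a different route as the only actual proof on the table; what it buys is self-containedness, at the cost of a short computation the author chose to outsource to the literature. Both are perfectly acceptable; if you want to match the paper's economy, the one-line citation suffices, but your version has the advantage that a reader need not chase the reference.
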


The next lemma relates power sums to Bernoulli numbers and Bernoulli polynomials (for a proof, see \cite[Chapter~15]{IR}).

\begin{lemma}
\label{l4}
Let $k$ and $m$ be positive integers. Then:
\begin{itemize}
\item[\textup{(a)}]
$S_k(m)=(B_{k+1}(m)-B_{k+1})/(k+1)$.
\item[\textup{(b)}]
$S_k(m)=\sum_{j=0}^k \binom kj B_{k-j}\frac{m^{j+1}}{j+1}$.
\end{itemize}
\end{lemma}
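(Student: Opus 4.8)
The plan is to prove part (a) first, obtaining the closed form for $S_k(m)$ from the characteristic difference property of Bernoulli polynomials, and then to deduce part (b) from (a) by a purely algebraic expansion and reindexing. The engine behind (a) is the identity
$$
B_{k+1}(x+1)-B_{k+1}(x)=(k+1)x^k,
$$
valid for every $k\ge 0$; once this is in hand, a telescoping sum does all the work.

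First I would establish the exponential generating function of the Bernoulli polynomials from the data already in the excerpt. Multiplying the series for $z/(e^z-1)$ by $e^{xz}=\sum_{n\ge0}x^nz^n/n!$ and collecting terms by a Cauchy product, the definition $B_k(x)=\sum_{j=0}^k\binom kj B_jx^{k-j}$ gives
$$
\frac{z\,e^{xz}}{e^z-1}=\sum_{k=0}^{\infty}B_k(x)\frac{z^k}{k!}.
$$
Subtracting the series at $x$ from the series at $x+1$ and using $e^{(x+1)z}-e^{xz}=e^{xz}(e^z-1)$ collapses the denominator, leaving $z\,e^{xz}=\sum_{k\ge1}k\,x^{k-1}z^k/k!$; comparing the coefficients of $z^{k+1}/(k+1)!$ then yields the displayed difference identity. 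Then I would telescope: summing $B_{k+1}(j+1)-B_{k+1}(j)=(k+1)j^k$ over $j=0,1,\dots,m-1$ gives $B_{k+1}(m)-B_{k+1}(0)=(k+1)\sum_{j=1}^{m-1}j^k$, and since $B_{k+1}(0)=B_{k+1}$ and the $j=0$ term vanishes, this is exactly $(k+1)S_k(m)=B_{k+1}(m)-B_{k+1}$, which is part (a).

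For part (b) I would simply expand $B_{k+1}(m)$ by its definition. The top term $\binom{k+1}{k+1}B_{k+1}m^0=B_{k+1}$ cancels the subtracted $B_{k+1}$, so
$$
S_k(m)=\frac1{k+1}\sum_{i=0}^{k}\binom{k+1}{i}B_i\,m^{k+1-i}.
$$
Reindexing by $j=k-i$ and invoking the elementary identity $\frac1{k+1}\binom{k+1}{i}=\frac1{k+1-i}\binom{k}{i}$ (both sides equal $k!/(i!\,(k+1-i)!)$) rewrites this as $\sum_{j=0}^k\binom kj B_{k-j}\,m^{j+1}/(j+1)$, which is the claim.

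The only genuinely delicate point is the manipulation of the generating functions: one should either work in the ring of formal power series, where the Cauchy product and the cancellation $e^{(x+1)z}-e^{xz}=e^{xz}(e^z-1)$ are purely formal and require no convergence input, or restrict to $|z|<2\pi$ and invoke absolute convergence to justify rearranging the double sum. Everything after the difference identity---the telescoping in (a) and the binomial bookkeeping in (b)---is routine.
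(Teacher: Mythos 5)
Your proof is correct. The paper itself gives no argument for this lemma --- it simply cites Ireland and Rosen --- and your derivation (the difference identity $B_{k+1}(x+1)-B_{k+1}(x)=(k+1)x^k$ via the generating function $z e^{xz}/(e^z-1)$, followed by telescoping for (a) and the binomial reindexing $\frac{1}{k+1}\binom{k+1}{i}=\frac{1}{k+1-i}\binom{k}{i}$ for (b)) is precisely the standard textbook treatment being referenced. All steps check out, including the edge cases ($j=0$ term vanishing since $k\ge 1$, and the cancellation of the $i=k+1$ term against $B_{k+1}$).
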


We next obtain the analogue of Lemma~\ref{l4}(b) for $T_k(m)$ with even $k$.

\begin{lemma}
\label{l5}
Let $k$ and $m$ be positive integers, where $k$ is even. Then
$$
T_k(m)=2^k\sum_{j=0}^k \binom kj B_{k-j}\frac{(2m+1)^{j+1}}{2^{j+1}(j+1)}.
$$
\end{lemma}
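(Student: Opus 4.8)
The plan is to reduce everything to the ordinary power sum $S_k$ and then re-expand through a single Bernoulli polynomial evaluated at a half-integer. The starting point is the identity $T_k(m)=S_k(2m+1)-2^kS_k(m+1)$ recorded at the start of the proof of Lemma~\ref{l2}. Applying Lemma~\ref{l4}(a) to each summand, I would write
$$(k+1)T_k(m)=\bigl(B_{k+1}(2m+1)-B_{k+1}\bigr)-2^k\bigl(B_{k+1}(m+1)-B_{k+1}\bigr),$$
and regroup the right-hand side as $\bigl(B_{k+1}(2m+1)-2^kB_{k+1}(m+1)\bigr)+(2^k-1)B_{k+1}$.

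Now comes the crucial step. Since $k$ is even, $k+1$ is an odd integer $\ge 3$, so $B_{k+1}=0$ and the last term drops out. For the remaining difference I would apply Raabe's formula (Lemma~\ref{l3}) with exponent $k+1$ and argument $2m+1$: because $(2m+1)/2=m+\tfrac12$ and $(2m+2)/2=m+1$, the formula gives $B_{k+1}(2m+1)=2^k\bigl(B_{k+1}(m+\tfrac12)+B_{k+1}(m+1)\bigr)$, whence
$$B_{k+1}(2m+1)-2^kB_{k+1}(m+1)=2^kB_{k+1}\Bigl(\tfrac{2m+1}2\Bigr).$$
Combining the two displays yields the compact formula $T_k(m)=\dfrac{2^k}{k+1}\,B_{k+1}\bigl(\tfrac{2m+1}2\bigr)$.

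It remains to expand this half-integer value into the sum in the statement. For this I would observe that Lemmas~\ref{l4}(a) and \ref{l4}(b) together give
$$\frac{B_{k+1}(n)-B_{k+1}}{k+1}=\sum_{j=0}^k \binom kj B_{k-j}\frac{n^{j+1}}{j+1}\qquad\text{for every positive integer }n.$$
Both sides are the values at $x=n$ of polynomials in $x$ of degree $k+1$; since these polynomials agree at infinitely many points, they are identical, so the identity persists at the half-integer $x=(2m+1)/2$. Using $B_{k+1}=0$ once more together with $\bigl((2m+1)/2\bigr)^{j+1}=(2m+1)^{j+1}/2^{j+1}$, the resulting equation, multiplied by $2^k$, is exactly the claimed formula for $T_k(m)$.

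The argument is a short bookkeeping chain and I anticipate no serious obstacle; the one genuinely substantive point is the passage from integer to half-integer argument. Lemma~\ref{l4}(b) is a priori a statement about integers, and what legitimizes its evaluation at $(2m+1)/2$ is the polynomial-identity principle rather than any direct combinatorial manipulation. It is worth noting that a naive term-by-term approach, inserting Lemma~\ref{l4}(b) directly into $S_k(2m+1)-2^kS_k(m+1)$, does not reproduce the stated summand, since the factor $2^k$ appearing there fails to match the factor $2^{j+1}$ in the target; it is precisely Raabe's formula that reconciles the two.
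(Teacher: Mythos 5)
Your proof is correct and follows essentially the same route as the paper: both reduce $T_k(m)$ to a difference of $S_k$-values, pass to Bernoulli polynomials via Lemma~\ref{l4}(a), and use Raabe's formula to collapse the result to $\frac{2^k}{k+1}B_{k+1}\bigl(m+\frac12\bigr)$ before expanding. The only (immaterial) differences are that the paper starts from $T_k(m)=S_k(2m)-2^kS_k(m)$ rather than $S_k(2m+1)-2^kS_k(m+1)$, and expands $B_{k+1}\bigl(m+\frac12\bigr)$ directly from the definition of the Bernoulli polynomial instead of extending Lemma~\ref{l4}(b) to half-integers by the polynomial-identity principle.
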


\begin{proof}
Since $k>0$ is even, we have $B_{k+1}=0$. Then, by Lemmas~\ref{l3} and \ref{l4}(a),
\begin{align*}
T_k(m)&=S_k(2m)-2^kS_k(m)=\frac{B_{k+1}(2m)-2^kB_{k+1}(m)}{k+1}\\
&=\frac{2^k}{k+1}B_{k+1}\Bigl(m+\frac 12\Bigr)=\frac{2^k}{k+1}\sum_{j=0}^{k+1}\binom{k+1}j B_j\cdot \Bigl(m+\frac 12\Bigr)^{k+1-j}\\
&=2^k\sum_{j=0}^k\frac{k!}{j!(k-j+1)!}B_j\cdot\Bigl(m+\frac 12\Bigr)^{k-j+1}=2^k\sum_{j=0}^k\binom kj B_{k-j}\frac{(2m+1)^{j+1}}{2^{j+1}(j+1)}.
\end{align*}
\end{proof}

Write $B_j=U_j/V_j$, where $U_j$ and $V_j$ are integers, $V_j>0$, $\gcd(U_j,V_j)=1$. Kellner~\cite{K2} used Lemma~\ref{l4}(b) to derive the following result (see \cite[Proposition~8.5]{K2}).

\begin{lemma}
\label{l6}
Let $k$ and $m$ be positive integers, where $k$ is even. Then:
\begin{itemize}
\item[\textup{(a)}] $m^2\mid S_k(m)$ if and only if $m\mid U_k$.
\item[\textup{(b)}] $m^3\mid S_k(m)$ if and only if $m^2\mid U_k$.
\end{itemize}
\end{lemma}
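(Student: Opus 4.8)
The plan is to fix an even $k\ge 4$ (the case $k=2$, where $S_2(m)=(m-1)m(2m-1)/6$, being a direct check) and to study, for each prime $p\mid m$ separately, the $p$-adic valuation $v_p(S_k(m))$; write $e:=v_p(m)$. Reindexing Lemma~\ref{l4}(b) by $i=j+1$ and using $\frac1i\binom{k}{i-1}=\frac1{k+1}\binom{k+1}{i}$ gives
$$
S_k(m)=\sum_{i=1}^{k+1}c_i m^i,\qquad c_i=\frac1i\binom{k}{i-1}B_{k+1-i}.
$$
Since $k$ is even, $B_{k+1-i}=0$ for every even $i$ with $2\le i\le k-2$; hence the only nonzero terms have $i$ odd, or $i=k$ (where $c_k m^k=-\tfrac12 m^k$), or $i=k+1$ (where $c_{k+1}m^{k+1}=m^{k+1}/(k+1)$). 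In particular there is no $m^2$-term, so the leading behaviour is governed by $c_1 m=B_k m$. Throughout I would use the von Staudt--Clausen theorem in the form $v_p(B_j)\ge -1$, with equality precisely when $(p-1)\mid j$; as $k$ is even this forces $p\mid V_k$ for $p\in\{2,3\}$.

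The basic estimate is that $v_p(c_i m^i)\ge ie-v_p(i)-1$ for every $i$, which by $v_p(i)\le\log_p i$ exceeds $e-1$ whenever $i\ge 3$ and is in fact $\ge 2e$ whenever $p\ge 5$. I would then split on $p$. If $p\mid V_k$, then $v_p(B_k)=-1$ and, since $\gcd(U_k,V_k)=1$, also $v_p(U_k)=0$; the leading term $v_p(B_k m)=e-1$ strictly dominates the tail, so $v_p(S_k(m))=e-1<2e\le 3e$. Thus at such a prime the left-hand sides of (a) and (b) fail, as do $v_p(U_k)\ge e$ and $v_p(U_k)\ge 2e$. If instead $p\nmid V_k$ (which forces $p\ge 5$), every term with $i\ge 3$ has $v_p\ge 2e$, so $v_p(S_k(m))\ge 2e\iff v_p(B_k m)\ge 2e\iff v_p(U_k)\ge e$. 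Intersecting these equivalences over all $p\mid m$, and noting that $m\mid U_k$ already forces $\gcd(m,V_k)=1$, proves (a).

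For (b) the same estimate shows that, when $p\ge 5$, every nonzero term other than $c_1m$ and $c_3m^3$ is $\ge 3e$, reducing the problem to the cubic term: for $p\nmid V_k$ one has $v_p(S_k(m))\ge 3e\iff v_p(B_k+c_3m^2)\ge 2e$, where $c_3=\binom{k}{2}B_{k-2}/3$. The main obstacle is that, unlike in (a), $c_3$ need not be a $p$-integer---exactly when $(p-1)\mid(k-2)$ and $p\nmid k(k-1)$, in which case $v_p(c_3m^2)=2e-1$---so it cannot simply be discarded. I would remove this obstruction with Kummer's congruence: $(p-1)\mid(k-2)$ means $k\equiv 2\pmod{p-1}$ with $(p-1)\nmid k$, whence $B_k/k\equiv B_2/2=\tfrac1{12}\pmod p$, and since $p\ge 5$ does not divide $12$ this gives $v_p(B_k)=v_p(k)$. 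A short case analysis on $v_p(k(k-1))$ then shows the cubic term never moves $v_p(B_k+c_3m^2)$ across the threshold $2e$, so that $v_p(S_k(m))\ge 3e\iff v_p(U_k)\ge 2e$; intersecting over $p\mid m$ yields $m^3\mid S_k(m)\iff m^2\mid U_k$.
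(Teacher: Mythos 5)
Your proof is correct: the reindexed expansion of Lemma~\ref{l4}(b), the von Staudt--Clausen control of denominators, the prime-by-prime valuation count (with $p=2,3$ absorbed into the $p\mid V_k$ case), and the Kummer-congruence treatment of the $c_3m^2$ term in part (b) all check out. The paper itself gives no proof of this lemma --- it quotes it from Kellner \cite{K2} with the remark that it is derived from Lemma~\ref{l4}(b) --- and your argument is precisely a worked-out version of that indicated route, so there is nothing to contrast.
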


The next lemma can be proved in exactly the same way as Lemma~\ref{l6}, except that Lemma~\ref{l5} is invoked instead of Lemma~\ref{l4}(b).
\begin{lemma}
\label{l7}
Let $k$ and $m$ be positive integers, where $k$ is even. Then:
\begin{itemize}
\item[\textup{(a)}] $(2m+1)^2\mid T_k(m)$ if and only if $(2m+1)\mid U_k$.
\item[\textup{(b)}] $(2m+1)^3\mid T_k(m)$ if and only if $(2m+1)^2\mid U_k$.
\end{itemize}
\end{lemma}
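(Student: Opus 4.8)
The plan is to transplant Kellner's proof of Lemma~\ref{l6} (his Proposition~8.5) almost verbatim, feeding in Lemma~\ref{l5} wherever he uses Lemma~\ref{l4}(b). Writing $M=2m+1$ and absorbing the outer $2^k$, Lemma~\ref{l5} becomes
$$
T_k(m)=\sum_{j=0}^{k}\binom kj B_{k-j}\,\frac{2^{\,k-j-1}M^{\,j+1}}{j+1},
$$
whose $j=0$ summand is the leading term $2^{k-1}B_kM=2^{k-1}\tfrac{U_k}{V_k}M$. This is the exact structural analogue of $S_k(m)=\sum_j\binom kj B_{k-j}\tfrac{m^{j+1}}{j+1}$ with leading term $B_km=\tfrac{U_k}{V_k}m$, the only formal discrepancy being the factors $2^{\,k-j-1}$.

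First I would record the single observation that licenses ``the same proof.'' Since $M=2m+1$ is odd, every prime $p\mid M$ is odd, so $\ord_p(2^{\,k-j-1})=0$ for all $j$; the powers of $2$ are $p$-adic units and drop out of every valuation count. Hence, for each $p\mid M$, the $p$-adic valuation of the $j$th summand of $T_k(m)$ equals that of the $j$th summand of $S_k(M)$ treated in Kellner's argument, and both divisibility statements reduce prime by prime to the very estimates he uses.

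Next I would run the comparison prime by prime. Fix $p\mid M$ and put $e=\ord_p(M)\ge1$; the leading summand has valuation $\ord_p(U_k)-\ord_p(V_k)+e$. By von Staudt--Clausen, $V_k$ is squarefree with $p\mid V_k\iff(p-1)\mid k$, so either $(p-1)\nmid k$, giving $\ord_p(V_k)=0$ and leading valuation $\ord_p(U_k)+e$, or $(p-1)\mid k$, giving $\ord_p(B_k)=-1$, hence $\ord_p(U_k)=0$ and leading valuation $e-1$. In both cases the leading term already encodes the desired thresholds: for (a) its valuation is $\ge2e$ exactly when $\ord_p(U_k)\ge e$, and for (b) it is $\ge3e$ exactly when $\ord_p(U_k)\ge2e$. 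It remains to check that the tail ($j\ge1$) never disturbs the comparison dictated by the leading term; conjoining the resulting local statements over all $p\mid M$ then yields (a) and (b).

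The hard part is precisely this tail estimate. Using $\ord_p(B_{k-j})\ge-1$, the vanishing of $B_{k-j}$ for odd $k-j>1$ (so that only even $j$ and the lone index $j=k-1$ contribute), and $\ord_p(j+1)\le\log_p(j+1)$, one bounds each tail summand from below and verifies that its valuation exceeds that of the leading term when the latter is below the tested threshold, and reaches the threshold when the leading term does; thus $\ord_p(T_k(m))$ is controlled by the leading term exactly as required. These are the same inequalities Kellner verifies for Lemma~\ref{l6}; since replacing $m$ by the odd $M$ and inserting the unit factors $2^{\,k-j-1}$ changes none of them, they transfer unchanged. I expect the genuinely fiddly points to be the boundary cases (small $j$ with $p=3$, and the edge case $k=2$), but these are inherited wholesale from the established proof of Lemma~\ref{l6}.
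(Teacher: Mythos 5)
Your proposal is correct and coincides with the paper's own (one-sentence) proof, which simply asserts that Lemma~\ref{l7} follows by repeating Kellner's proof of Lemma~\ref{l6} with Lemma~\ref{l5} substituted for Lemma~\ref{l4}(b). Your added observation that the extra factors $2^{k-j-1}$ are $p$-adic units for every prime $p$ dividing the odd number $2m+1$, so that every valuation estimate in Kellner's term-by-term argument transfers unchanged, is precisely the point that justifies this transplantation.
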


An odd prime $p$ is said to be an \emph{irregular prime} if $p$ divides some $U_r$ with even $r\le p-3$. Otherwise, the prime $p$ is said to be a \emph{regular prime}. The pairs $(r,p)$ with $p\mid U_r$ and even $r\le p-3$ are said to be \emph{irregular pairs}.

\section{The Equation $aT_k(m)=(2m+1)^k$}
\label{s3}

First assume that $m=1$. Then $a=3^k$. Next assume that $m>1$ and $k$ is odd. Appealing to Lemma~\ref{l2}, we see that $(2m+1)^k$ must be divisible by $m$, which is impossible. This shows that we may restrict our study to solutions $(m,k)$ with $m>1$ and even $k$.

Proceeding exactly as in Section~3 of \cite{BM} (with $m$ replaced by $2m+1$) and making use of Lemmas~\ref{l2}, \ref{l5} and \ref{l7}, we obtain the following results.

\begin{theorem}
\label{t3}
Suppose that $aT_k(m)=(2m+1)^k$ with $m>1$ and even $k$. Let $p$ be a prime dividing $2m+1$. Then: 
\begin{itemize}
\item[\textup{(a)}] $p$ is an irregular prime.
\item[\textup{(b)}] $k\not\equiv 0,2,4,6,8,10,14\pmod{p-1}$.
\item[\textup{(c)}] $\ord_p (B_k/k)\ge 2\ord_p (2m+1)\ge 2$.
\item[\textup{(d)}] $k\equiv r\pmod{p-1}$ for some irregular pair $(r,p)$.
\end{itemize}
\end{theorem}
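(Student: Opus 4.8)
The plan is to transport the entire argument of Section~3 of \cite{BM} for the equation $aS_k(m)=m^k$ to the present setting under the dictionary $m\mapsto N:=2m+1$, $S_k\mapsto T_k$, with Lemmas~\ref{l1}, \ref{l4}(b), \ref{l6} replaced by their $T$-analogues, namely Lemmas~\ref{l2}, \ref{l5}, \ref{l7}. The only formal differences between the expansion in Lemma~\ref{l5} and that in Lemma~\ref{l4}(b) are the global factor $2^k$ and the denominators $2^{j+1}$; since every prime $p\mid N$ is odd, these are $p$-adic units and are irrelevant to any valuation computation. Hence each $p$-adic assertion of \cite{BM} transfers verbatim, and it suffices to re-run the steps below, using throughout that $N=2m+1>3$ is odd and $k$ is even, as recorded at the start of this section.

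First I would establish the coprimality $\gcd(a,N)=1$, exactly as in the corresponding step of \cite{BM}, so that $\ord_p(T_k(m))=k\,\ord_p(N)$ for every prime $p\mid N$. Since $k\ge 2$, this gives $N^2\mid T_k(m)$, whence Lemma~\ref{l7}(a) yields $N\mid U_k$, and in particular $p\mid U_k$. To rule out $(p-1)\mid k$ (the residue $0$ in part~(b)), observe that if $(p-1)\mid k$ then the von Staudt--Clausen theorem forces $p\mid V_k$, i.e.\ $p\nmid U_k$, a contradiction. Hence $(p-1)\nmid k$, so $p\nmid V_k$, and $p\mid U_k$ is then equivalent to $\ord_p(B_k)\ge 1$.

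For part~(c) I would sharpen this. For $k\ge 4$ one has $N^3\mid T_k(m)$, so Lemma~\ref{l7}(b) gives $N^2\mid U_k$, i.e.\ $\ord_p(B_k)\ge 2\,\ord_p(N)$; the finer $p$-adic analysis of Lemma~\ref{l5}, isolating the leading term $2^{k-1}B_kN$ and bounding the tail exactly as in \cite{BM}, in fact pins down $\ord_p(T_k(m))=\ord_p(B_k)+\ord_p(N)$ in a solution, so that $\ord_p(B_k)=(k-1)\ord_p(N)$. Because $(p-1)\nmid k$, the quotient $B_k/k$ is $p$-integral, whence $\ord_p(B_k/k)=(k-1)\ord_p(N)-\ord_p(k)\ge 2\,\ord_p(N)\ge 2$ for $k\ge 4$; the case $k=2$ does not occur, since the same computation would force $\ord_p(B_2)=\ord_p(N)\ge 1$, contradicting $\ord_p(B_2)\le 0$. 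This is part~(c). Parts (a), (b), (d) then follow from $p\mid U_k$ together with $(p-1)\nmid k$ via Kummer's congruences: letting $r$ be the unique even integer with $0<r\le p-3$ and $k\equiv r\pmod{p-1}$, one has $B_k/k\equiv B_r/r\pmod p$, so $p\mid U_r$, the pair $(r,p)$ is irregular, and $p$ is an irregular prime, giving (d) and (a). For (b) it remains to exclude $r\in\{2,4,6,8,10,14\}$: since $|U_2|=|U_4|=|U_6|=|U_8|=1$, no prime divides these, while $U_{10}=5$ and $U_{14}=7$ would force $p\in\{5,7\}$, incompatible with $r\le p-3$; together with the residue $0$ already excluded, this is precisely part~(b).

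The main obstacle is the $p$-adic valuation computation underlying part~(c): one must show that the tail terms $\binom kj B_{k-j}N^{j+1}/(2^{j+1}(j+1))$ of Lemma~\ref{l5} never undercut the leading term $2^{k-1}B_kN$, the dangerous indices being those with $(p-1)\mid(k-j)$, where $B_{k-j}$ has a pole, and those with $p\mid(j+1)$. This is exactly the estimate performed in Section~3 of \cite{BM}, and it carries over without change because the extra dyadic factors in Lemma~\ref{l5} are units at the odd prime $p$. Everything else --- Kummer's congruences, the von Staudt--Clausen theorem, and the table of small numerators $U_r$ --- is routine bookkeeping.
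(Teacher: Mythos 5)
Your overall strategy --- transporting Section~3 of \cite{BM} via the dictionary $m\mapsto N=2m+1$, with Lemmas~\ref{l2}, \ref{l5}, \ref{l7} replacing Lemmas~\ref{l1}, \ref{l4}(b), \ref{l6}, and noting that the extra dyadic factors in Lemma~\ref{l5} are units at every odd prime $p\mid N$ --- is exactly what the paper does (its ``proof'' is literally this one-line citation). However, your write-up of the details contains a genuine error at the very first step: the claimed coprimality $\gcd(a,N)=1$ is \emph{false} for every $a>1$. Indeed, $aT_k(m)=N^k$ forces $a\mid N^k$, hence every prime divisor of $a$ divides $N$; so $\gcd(a,N)=1$ holds only when $a=1$. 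There is no such coprimality step in \cite{BM} for the same reason. Consequently your identity $\ord_p(T_k(m))=k\,\ord_p(N)$ should read $\ord_p(T_k(m))=k\,\ord_p(N)-\ord_p(a)$, and everything downstream --- $N^2\mid T_k(m)$, $N^3\mid T_k(m)$, the exact value $\ord_p(B_k)=(k-1)\ord_p(N)$, and hence parts (a)--(d) --- breaks at precisely the primes $p\mid\gcd(a,N)$. These are not a negligible exceptional set: they are exactly the primes the theorem is needed for, since Corollary~\ref{c2}, Corollary~\ref{c3} and Theorem~\ref{t1} apply Theorem~\ref{t3} to prime divisors of $a$ (which, as just noted, automatically divide $N$).

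The missing ingredient is a quantitative bound on $\ord_p(a)$ rather than its vanishing. From $T_k(m)>(2m-1)^k=(N-2)^k$ one gets $a=N^k/T_k(m)<\bigl(N/(N-2)\bigr)^k\le(5/3)^k$, so for $p\ge 3$ one has $\ord_p(a)<k\log(5/3)/\log 3<k/2$; this (with small $k$ treated separately) is what guarantees $\ord_p(T_k(m))=k\,\ord_p(N)-\ord_p(a)\ge 3\,\ord_p(N)$ and lets the Kellner-type expansion of Lemma~\ref{l5} deliver $\ord_p(B_k)\ge 2\,\ord_p(N)$, and likewise rules out $(p-1)\mid k$ (which would force $\ord_p(T_k(m))=\ord_p(N)-1$ and hence an impossibly large $\ord_p(a)$). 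Your remaining steps --- von Staudt--Clausen, the Kummer congruence $B_k/k\equiv B_r/r\pmod p$, and the exclusion of $r\in\{2,4,6,8,10,14\}$ via the small numerators $U_r$ --- are fine, but they all sit on top of the broken valuation computation and need this repair.
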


\begin{corollary}
\label{c2}
If $a$ has a regular prime divisor, then the equation $aT_k(m)={(2m+1)^k}$ has no solutions with $m>1$.
\end{corollary}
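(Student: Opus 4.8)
The plan is to derive this immediately from part~(a) of Theorem~\ref{t3}, exactly as the corresponding statement is obtained for the Kellner--Erd\H{o}s--Moser equation. Essentially all of the arithmetic content has already been packed into Theorem~\ref{t3}, so the corollary amounts to no more than extracting it via a one-line contradiction.

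I would argue by contradiction. Suppose $a$ has a regular prime divisor $q$ and that $(m,k)$ is a solution of $aT_k(m)=(2m+1)^k$ with $m>1$. First note that $q$ is necessarily odd, since the notions of \emph{regular} and \emph{irregular} prime are defined only for odd primes; this is consistent with what follows, as $2m+1$ is odd. By the discussion opening this section, the hypothesis $m>1$ forces $k$ to be even, so we are precisely in the situation covered by Theorem~\ref{t3}. Next I would locate $q$ among the prime divisors of $2m+1$: since $q\mid a$ and $a\mid(2m+1)^k$ (because $aT_k(m)=(2m+1)^k$ and $T_k(m)$ is a positive integer), we get $q\mid(2m+1)^k$, and the primality of $q$ yields $q\mid 2m+1$. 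Applying Theorem~\ref{t3}(a) to this prime $q$ dividing $2m+1$ then forces $q$ to be an irregular prime, contradicting the choice of $q$ as a regular prime divisor of $a$. Hence no solution with $m>1$ can exist.

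There is essentially no obstacle here, since the entire difficulty has been discharged in establishing Theorem~\ref{t3}(a). The only points requiring a moment's care are verifying that the chosen prime divisor of $a$ is odd, so that the regular/irregular dichotomy is meaningful, and confirming that the reduction to even $k$ is legitimate; both are immediate from the material already in place.
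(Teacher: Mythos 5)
Your proof is correct and follows the paper's (implicit) route: the corollary is exactly the immediate consequence of Theorem~\ref{t3}(a) obtained by noting that any prime divisor of $a$ divides $(2m+1)^k$ and hence $2m+1$. The reductions you check (oddness of a regular prime, evenness of $k$ for $m>1$) are the right ones and match the discussion at the start of Section~\ref{s3}.
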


\begin{corollary}
\label{c3}
Let $p_1$ and $p_2$ be distinct irregular prime divisors of $a$. Assume that for every pair $(r_1,p_1)$, $(r_2,p_2)$ of irregular pairs,  $\gcd({p_1-1},p_2-1)\nmid(r_1-r_2)$. Then the equation $aT_k(m)=(2m+1)^k$ has no solutions.
\end{corollary}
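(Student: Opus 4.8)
The plan is to derive a contradiction by combining the divisibility constraints forced by the equation with the congruence condition supplied by Theorem~\ref{t3}(d). First I would dispose of the degenerate cases exactly as in the opening of Section~\ref{s3}: the case $m=1$ forces $a=3^k$, whose only prime divisor is the regular prime $3$, so it cannot carry two distinct irregular prime divisors; and the case of odd $k$ with $m>1$ is excluded by Lemma~\ref{l2}, since there $(2m+1)^k$ would have to be divisible by $m$. Hence any hypothetical solution has $m>1$ and even $k$, which is precisely the regime in which Theorem~\ref{t3} applies.

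Next I would extract the divisibility of $2m+1$ by the two primes. Since $aT_k(m)=(2m+1)^k$, every prime divisor of $a$ divides $(2m+1)^k$ and therefore divides $2m+1$. In particular both $p_1$ and $p_2$ divide $2m+1$. Applying Theorem~\ref{t3}(d) with $p=p_1$ then produces an irregular pair $(r_1,p_1)$ with $k\equiv r_1\pmod{p_1-1}$, and applying it with $p=p_2$ produces an irregular pair $(r_2,p_2)$ with $k\equiv r_2\pmod{p_2-1}$.

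Finally I would run the elementary gcd step. Writing $d=\gcd(p_1-1,p_2-1)$, the first congruence gives $d\mid(k-r_1)$ and the second gives $d\mid(k-r_2)$; subtracting, $d\mid(r_1-r_2)$. This contradicts the standing hypothesis $\gcd(p_1-1,p_2-1)\nmid(r_1-r_2)$, so no solution can exist.

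Since the substantive work is entirely packaged inside Theorem~\ref{t3}(d), I do not expect any genuine obstacle in this corollary; it is a formal deduction. The only points needing care are confirming that the two degenerate cases are vacuous under the two-distinct-irregular-prime-divisor hypothesis, and noting why the hypothesis is imposed for \emph{every} admissible pair of irregular pairs: Theorem~\ref{t3}(d) only guarantees the \emph{existence} of suitable $r_1$ and $r_2$, so to be sure the contradiction fires regardless of which pair is produced, the non-divisibility condition must hold uniformly over all choices of $(r_1,p_1)$ and $(r_2,p_2)$.
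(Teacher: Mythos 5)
Your proof is correct and follows exactly the intended argument: the paper gives no explicit proof of this corollary (deferring to the analogous reasoning in \cite{BM}), and the standard deduction is precisely yours --- both $p_1$ and $p_2$ divide $2m+1$, Theorem~\ref{t3}(d) yields the two congruences on $k$, and the gcd step produces the contradiction. Your handling of the degenerate cases ($m=1$ giving $a=3^k$ with only the regular prime $3$, and odd $k$ excluded by Lemma~\ref{l2}) and your remark on why the hypothesis must hold for \emph{every} pair of irregular pairs are both accurate.
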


\begin{remark}
We observe that $aT_k(m)$ with $k>1$ and $m>1$ can be a perfect $k$th power even when $k$ is odd or $a$ has a regular prime divisor, for example, $315T_2(3)=105^2$ and $12005T_3(5)=245^3$. More examples can be constructed from formulas expressing $T_k(m)$ for small values of~$k$.
\end{remark}

\section{Helpful Pairs}
\label{s4}

For a positive integer $a$ let us call a pair $(t,q)_a$ with $q>3$ a prime and\linebreak $2\le t\le q-3$ even to be a \emph{potentially helpful pair} if $q\nmid a$ and, in case of irregular $q$, $(t,q)$ is not an irregular pair.

Let  $(t,q)_a$ be a potentially helpful pair. We say that $(t,q)_a$ is a \emph{helpful pair of the first kind} if $aS_t(x)\equiv x^t\pmod{q}$ implies $x\equiv 0\pmod{q}$. In view of Corollary~\ref{c1} this definition is equivalent to the definition of a helpful pair given in Section~4 of~\cite{BM}. We say that $(t,q)_a$ is a \emph{helpful pair of the second kind} if $aT_t(x)\equiv (2x+1)^t\pmod{q}$ implies $2x+1\equiv 0\pmod{q}$. The following lemma plays a crucial role in our argument.

\begin{lemma}
\label{l8}
Let  $(t,q)_a$ be a potentially helpful pair. Then $(t,q)_a$ is a helpful pair of the first kind if and only if it is a helpful pair of the second kind.
\end{lemma}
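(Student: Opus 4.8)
The plan is to reduce both conditions to the \emph{same} congruence by exploiting the Bernoulli-polynomial descriptions of $S_t$ and $T_t$. Since $t$ is even we have $B_{t+1}=0$, so Lemma~\ref{l4}(a) gives the polynomial identity $S_t(z)=B_{t+1}(z)/(t+1)$, while the computation in the proof of Lemma~\ref{l5} gives $T_t(x)=\frac{2^t}{t+1}B_{t+1}\!\left(x+\frac12\right)=2^tS_t\!\left(x+\frac12\right)$. Writing $(2x+1)^t=2^t\left(x+\frac12\right)^t$ and setting $\Phi(z):=aS_t(z)-z^t\in\mathbb{Q}[z]$, I obtain the exact rational identity
$$aT_t(x)-(2x+1)^t=2^t\,\Phi\!\left(x+\tfrac12\right).$$
Thus, over any ring in which $2$ is invertible, the equation $aT_t(x)=(2x+1)^t$ is precisely $aS_t(y)=y^t$ under the change of variable $y=x+\frac12$, i.e.\ $2y=2x+1$.

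Next I would justify reducing this identity modulo $q$. The point is that $\Phi$ has $q$-integral coefficients: the factor $1/(t+1)$ causes no trouble because $2\le t\le q-3$ forces $t+1<q$, hence $q\nmid(t+1)$; and every Bernoulli number $B_j$ with $0\le j\le t+1$ has denominator prime to $q$, since by the von Staudt--Clausen theorem this denominator is $\prod_{(p-1)\mid j}p$, and for $1\le j\le t+1\le q-2<q-1$ we have $(q-1)\nmid j$. This is the step where the constraint $t\le q-3$ built into the notion of a potentially helpful pair is essential, and I expect it to be the main technical point to state cleanly.

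Granting $\Phi\in\mathbb{Z}_{(q)}[z]$, pick an integer $y$ with $2y\equiv 2x+1\pmod q$ (possible since $q$ is odd). Then $y\equiv x+\frac12\pmod{q\mathbb{Z}_{(q)}}$, so $\Phi\!\left(x+\frac12\right)\equiv\Phi(y)=aS_t(y)-y^t\pmod q$, and the displayed identity yields
$$aT_t(x)-(2x+1)^t\equiv 2^t\bigl(aS_t(y)-y^t\bigr)\pmod q.$$
Because $2^t$ is a unit modulo $q$, this gives the equivalence $aT_t(x)\equiv(2x+1)^t\pmod q$ if and only if $aS_t(y)\equiv y^t\pmod q$, while $2x+1\equiv 0\pmod q$ holds exactly when $y\equiv 0\pmod q$. (Here the value $S_t(y)$ computed from the sum agrees with the polynomial value by Lemma~\ref{l4}(a), and by Corollary~\ref{c1} only the residue of $y$ matters.)

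Finally I would assemble the biconditional. The assignment $x\mapsto y$ with $2y\equiv 2x+1$ is a bijection of $\mathbb{Z}/q\mathbb{Z}$ onto itself, carrying the excluded locus $2x+1\equiv0$ to $y\equiv0$. Hence a ``bad'' solution for the second kind (an $x$ with $aT_t(x)\equiv(2x+1)^t$ but $2x+1\not\equiv0$) exists if and only if a ``bad'' solution for the first kind (a $y\not\equiv0$ with $aS_t(y)\equiv y^t$) exists. Negating both statements, $(t,q)_a$ is helpful of the first kind precisely when it is helpful of the second kind, which is the assertion of the lemma.
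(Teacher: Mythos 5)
Your proof is correct, but it justifies the central congruence by a genuinely different route from the paper's. Both arguments hinge on the same change of variable $y\equiv x+\tfrac12\pmod q$ and the unit $2^t$, reducing everything to the relation $aT_t(x)-(2x+1)^t\equiv 2^t\bigl(aS_t(y)-y^t\bigr)\pmod q$; the difference lies in how that relation is obtained. The paper stays at the level of finite sums: it takes $y=x+\frac{q+1}{2}$ as the integer representative of $x+\tfrac12$, splits $S_t\bigl(x+\frac{q+1}{2}\bigr)$ into $S_t\bigl(\frac{q+1}{2}\bigr)$ plus a tail whose terms after multiplication by $2^t$ become $(q+1+2j)^t\equiv(2j+1)^t\pmod q$, and kills the first piece by deducing $q\mid S_t\bigl(\frac{q+1}{2}\bigr)$ from Lemma~\ref{l1} (using $(q-1)\nmid t$ and the evenness of $t$). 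You instead promote everything to an exact identity over $\mathbb{Z}\bigl[\tfrac12\bigr]$ via $S_t(z)=B_{t+1}(z)/(t+1)$ and the Raabe-type computation from Lemma~\ref{l5}, and then reduce modulo $q$ after verifying $q$-integrality of the coefficients with the von Staudt--Clausen description of Bernoulli denominators. Your version buys an exact rational identity rather than merely a congruence, and it makes transparent where the hypothesis $t\le q-3$ is used (it forces $(q-1)\nmid j$ for all $j\le t+1$, hence $q$-integrality of $B_j/(t+1)$); the paper's version is more elementary and self-contained, needing only Lemma~\ref{l1} and manipulation of the defining sums, with no appeal to Bernoulli denominators or localization. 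The one point you should state rather than gesture at is that the polynomial value of $S_t$ at a positive integer agrees with the defining sum and that both sides of your congruence depend only on the residue of the variable modulo $q$ (Corollary~\ref{c1} covers this), so that your final bijection between ``bad'' residues for the two kinds translates back into statements about positive integers as the definitions require.
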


\begin{proof}
Since $(q-1)\nmid t$, we have, by Lemma~\ref{l1}, $q\mid S_t(q)$. Further, as $t$ is even,
$$
S_t(q)=S_t\Bigl(\frac{q+1}2\Bigr)+\sum_{j=1}^{(q-1)/2}(q-j)^t\equiv 2S_t\Bigl(\frac{q+1}2\Bigr)\pmod{q},
$$
and so $q\mid S_t((q+1)/2)$. For a positive integer $x$, we have
$$
S_t\Bigl(x+\frac{q+1}2\Bigr)= S_t\Bigl(\frac{q+1}2\Bigr)+\sum_{j=0}^{x-1}\Bigl(\frac{q+1}2+j\Bigr)^t.
$$
Hence
$$
2^tS_t\Bigl(x+\frac{q+1}2\Bigr)=2^tS_t\Bigl(\frac{q+1}2\Bigr)+\sum_{j=0}^{x-1}(q+1+2j)^t\equiv  T_t(x)\pmod{q}.
$$
This implies
\begin{equation}
\label{eq1}
2^t\left(aS_t\Bigl(x+\frac{q+1}2\Bigr)-\Bigl(x+\frac{q+1}2\Bigr)^t\right)\equiv aT_t(x)-(2x+1)^t\pmod{q},
\end{equation}
and
\begin{equation}
\label{eq2}
2^t\left(aS_t(x)-x^t\right)\equiv aT_t\Bigl(x+\frac{q-1}2\Bigr)-\left(2\Bigl(x+\frac{q-1}2\Bigr)+1\right)^t\pmod{q}.
\end{equation}

Assume that $(t,q)_a$ is a helpful pair of the first kind and for some positive integer $x$ the congruence $aT_k(x)\equiv(2x+1)^t\pmod{q}$ holds. Then, by \eqref{eq1},
$$
aS_t\Bigl(x+\frac{q+1}2\Bigr)\equiv \Bigl(x+\frac{q+1}2\Bigr)^t\pmod{q}.
$$
Hence $x+\frac{q+1}2\equiv 0\pmod{q}$, and so $2x+1\equiv 0\pmod{q}$. Therefore $(t,q)_a$ is a helpful pair of the second kind. 

In a similar manner, making use of \eqref{eq2}, we conclude that if $(t,q)_a$ is a helpful pair of the second kind, then it is a helpful pair of the first kind.
\end{proof}

From now on, we will call both helpful pairs of the first kind and helpful pairs of the second kind simply helpful pairs. Next we prove an analogue of \cite[Lemma~4.4]{BM} for the equation $aT_k(m)=(2m+1)^k$.

\begin{lemma}
\label{l9}
Let $q>3$ be a prime and $2\le t\le q-3$ . If $(t,q)_a$ is a helpful pair and $aT_k(m)=(2m+1)^k$, then $k\not\equiv t\pmod{q-1}$.
\end{lemma}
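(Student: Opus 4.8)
The plan is to argue by contradiction: assume $k\equiv t\pmod{q-1}$ and derive a contradiction from the helpful-pair hypothesis together with the equation $aT_k(m)=(2m+1)^k$. Since $2\le t\le q-3$ we have $(q-1)\nmid t$, hence $(q-1)\nmid k$; this is precisely the hypothesis needed to invoke the Carlitz--von Staudt congruence $q\mid S_k(q)$ from Lemma~\ref{l1}. Recall also that by the discussion opening Section~\ref{s3} we may assume $m>1$ and $k$ even. I would then split the argument according to whether $q$ divides $2m+1$.

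First I would dispose of the case $q\mid 2m+1$ using Theorem~\ref{t3}. Here $q$ is a prime divisor of $2m+1$, so parts (a) and (d) of Theorem~\ref{t3} show that $q$ is irregular and that $k\equiv r\pmod{q-1}$ for some irregular pair $(r,q)$. Combining this with $k\equiv t\pmod{q-1}$ gives $t\equiv r\pmod{q-1}$; since $t$ and $r$ both lie in $\{2,\dots,q-3\}$, a set of residues that are pairwise distinct modulo $q-1$, we must have $t=r$. Then $(t,q)$ is an irregular pair, contradicting the requirement that a potentially helpful pair $(t,q)_a$ have $(t,q)$ non-irregular whenever $q$ is irregular.

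The substance of the proof is the remaining case $q\nmid 2m+1$, which I would handle by reducing the equation modulo $q$ and applying the second-kind definition. Let $m_0$ be the representative of $m$ modulo $q$ with $1\le m_0\le q$. The key step is the congruence $T_k(m)\equiv T_t(m_0)\pmod q$, which I would obtain in two stages. For periodicity, passing from $m$ to $m+q$ adds the $k$th powers of $q$ consecutive odd integers, whose bases run over a complete residue system modulo $q$; their sum is therefore $\equiv S_k(q)\equiv 0\pmod q$, so $T_k$ has period $q$ in $m$ modulo $q$ and $T_k(m)\equiv T_k(m_0)\pmod q$. For the exponent, Fermat's little theorem gives $(2j-1)^k\equiv(2j-1)^t\pmod q$ for every $j$ (the terms with $q\mid 2j-1$ vanish on both sides), whence $T_k(m_0)\equiv T_t(m_0)\pmod q$. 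Together with $(2m+1)^k\equiv(2m+1)^t\equiv(2m_0+1)^t\pmod q$, the equation $aT_k(m)=(2m+1)^k$ yields $aT_t(m_0)\equiv(2m_0+1)^t\pmod q$. Since $(t,q)_a$ is a helpful pair of the second kind, this forces $2m_0+1\equiv 0\pmod q$, i.e. $q\mid 2m+1$, contradicting the case assumption.

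I expect the periodicity step---the $T_k$-analogue of Corollary~\ref{c1}---to be the main technical point, since one must verify that a block of $q$ consecutive odd $k$th powers reduces to $S_k(q)$ modulo $q$ and correctly track the single term whose base is divisible by $q$. An alternative that sidesteps a fresh periodicity argument is to reuse the identity $T_k(x)\equiv 2^kS_k\bigl(x+\tfrac{q+1}2\bigr)\pmod q$, already essentially derived in the proof of Lemma~\ref{l8} for exponent $t$ and valid for any even exponent with $(q-1)\nmid k$; setting $M=m+\tfrac{q+1}2$ and dividing by $2^k$ converts the equation into $aS_k(M)\equiv M^k\pmod q$, reducing the claim (via Corollary~\ref{c1} and Fermat's little theorem) directly to the first-kind helpful-pair argument of \cite[Lemma~4.4]{BM}.
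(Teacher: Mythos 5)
Your proof is correct and follows essentially the same route as the paper: use $k\equiv t\pmod{q-1}$ and Fermat's little theorem to pass from $aT_k(m)=(2m+1)^k$ to $aT_t\equiv(2\cdot+1)^t\pmod q$, invoke the second-kind helpful-pair property to force $q\mid 2m+1$, and contradict Theorem~\ref{t3}(a),(d) via the definition of a potentially helpful pair. The paper's version is just more streamlined: it applies the exponent reduction termwise to $T_k(m)$ directly (so no reduction of $m$ modulo $q$ and no periodicity argument are needed) and presents the argument as a single chain rather than a case split on whether $q\mid 2m+1$.
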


\begin{proof}
Assume that $k\equiv t\pmod{q-1}$. Then we find that $aT_t(m)\equiv aT_k(m)=(2m+1)^k\equiv(2m+1)^t\pmod{q}$. Hence $2m+1\equiv 0\pmod{q}$. Appealing to parts (a) and (d) of Theorem~\ref{t3}, we conclude that $q$ is irregular and $(t,q)$ is an irregular pair, which conrtadicts the definition of a potentially helpful pair.
\end{proof}

Lemma~\ref{l9} shows that, like in the case of the Kellner--Erd\H{o}s--Moser equation, one can try to use helpful pairs to prove that for a given $a$ and given even positive integers $c$ and $d$ the equation $aT_k(m)=(2m+1)^k$ has no solutions with $k\equiv c\pmod{d}$. Namely, if there is a prime $q>3$ such that $(q-1)\mid d$ and $(t,q)_a$ is a helpful pair, where $t$ is the least nonnegative integer congruent to $c$ modulo $q-1$, then, by Lemma~\ref{l9}, the equation $aT_k(m)=(2m+1)^k$ has no solutions with $k\equiv c\pmod{d}$. Otherwise, we multiply $d$ by an integer $\ell\ge 2$ and consider $\ell$ congruences $k\equiv c+jd\pmod{\ell d}$, $0\le j<\ell$. For each of these congruences we proceed with the same argument as above. Moreover, for both $aS_k(m)=m^k$ and $aT_k(m)=(2m+1)^k$ one can use the \emph{same} set of helpful pairs to rule out the possibility that $k\equiv c\pmod{d}$. This implies that if for a given $a$ it has already been established by means of helpful pairs that the equation $aS_k(m)=m^k$ has no solutions with $k\equiv c\pmod{d}$, then one can conclude that the equation $aT_k(m)=(2m+1)^k$ has no solutions with $k\equiv c\pmod{d}$, and vice versa. In view of this, we have the following analogue of Proposition~7.1 of \cite{BM}.

\begin{proposition}
\label{p1}
Suppose that $aT_k(m)=(2m+1)^k$ with $m>1$. Then:
\begin{itemize}
\item[\textup{(a)}] If $a\equiv 1$ or $2$ or $3\pmod{5}$, then $4\mid{k}$.
\item[\textup{(b)}] If $a\equiv 1$ or $3$ or $5\pmod{7}$, then $6\mid{k}$.
\item[\textup{(c)}] If $a\equiv 6$ or $7\pmod{11}$, then $10\mid{k}$.
\item[\textup{(d)}] If $a\equiv 2$ or $8$ or $11\pmod{13}$, then $12\mid{k}$.
\item[\textup{(e)}] If $a\equiv 1$ or $6\pmod{13}$, then $6\mid k$.
\item[\textup{(f)}] If $a\equiv 1$ or $5\pmod{11}$ and $a\equiv 15\pmod{31}$, then $10\mid{k}$.
\end{itemize}
\end{proposition}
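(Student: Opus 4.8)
The plan is to run the helpful-pair machinery of Lemma~\ref{l9} together with the equivalence of Lemma~\ref{l8}. By the opening paragraph of Section~\ref{s3} we may assume $m>1$ and $k$ even, so each assertion reduces to excluding the nonzero even residues of $k$ modulo the relevant modulus: in part~(a) the claim $4\mid k$ means ruling out $k\equiv 2\pmod 4$; in part~(b) the claim $6\mid k$ means ruling out $k\equiv 2$ and $k\equiv 4\pmod 6$; and analogously for the rest. All primes that occur, namely $5,7,11,13$ and $31$, are regular, so for these the notion ``potentially helpful'' collapses to $q\nmid a$, which the stated congruences on $a$ guarantee.

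Next, to exclude a class $k\equiv t\pmod{q-1}$ with $2\le t\le q-3$ even, Lemma~\ref{l9} shows it suffices that $(t,q)_a$ be a helpful pair, and by Lemma~\ref{l8} this is equivalent to the first-kind condition: $aS_t(x)\equiv x^t\pmod q$ forces $q\mid x$. Since $(q-1)\nmid t$, Corollary~\ref{c1} makes $S_t$ periodic modulo $q$ in its argument, so the test is a finite one. I would tabulate $S_t(x)$ and $x^t$ modulo $q$ for $x=1,\dots,q-1$ and, for each nonzero $x$, record the unique residue of $a$ (if any) that yields $aS_t(x)\equiv x^t$; the pair is then helpful precisely for the complementary residues of $a$ with $q\nmid a$. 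Intersecting the helpful $a$-classes over all values of $t$ required in a given part should reproduce exactly the congruence condition stated there. This disposes of parts~(a)--(e) by direct residue arithmetic with $q\in\{5,7,11,13\}$; in particular, the difference between~(d) and~(e) is precisely whether $(6,13)_a$ is helpful.

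Because Lemma~\ref{l8} makes the set of helpful pairs identical for the equations $aS_k(m)=m^k$ and $aT_k(m)=(2m+1)^k$, a shorter route is available: every helpful pair that proves the corresponding statement in Proposition~7.1 of~\cite{BM} is, verbatim, a helpful pair here, with Lemma~\ref{l9} playing the role of Lemma~4.4 of~\cite{BM}. Thus one may simply quote that proposition rather than recompute from scratch.

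I expect part~(f) to be the genuine obstacle, since no single prime suffices. For $a\equiv 1$ or $5\pmod{11}$ one checks that not all of $(2,11)_a,(4,11)_a,(6,11)_a,(8,11)_a$ are helpful---for instance $(4,11)_a$ is not---so $q=11$ alone cannot force $10\mid k$. Following the refinement described after Lemma~\ref{l9}, I would multiply the modulus $10$ by $\ell=3$ and split each surviving class $k\equiv t\pmod{10}$ into its three subclasses modulo $30$; since $(q-1)\mid 30$ for $q=31$, the pairs $(s,31)_a$ with $s\equiv t\pmod{10}$ and $2\le s\le 28$ become available, and the extra hypothesis $a\equiv 15\pmod{31}$ is exactly what renders them helpful. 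The only delicate point is the bookkeeping: tracking which residues of $k$ modulo $30$ are already cleared by $q=11$ and which remain to be cleared by $q=31$, a split that itself depends on $a\bmod 11$.
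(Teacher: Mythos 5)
Your proposal is correct and matches the paper's approach: the paper proves Proposition~\ref{p1} exactly by the ``shorter route'' you identify, namely that Lemmas~\ref{l8} and \ref{l9} make every helpful pair used for Proposition~7.1 of \cite{BM} work verbatim for $aT_k(m)=(2m+1)^k$, so the result transfers without recomputation. Your additional sketch of the direct residue verification (including the role of $(6,13)_a$ in distinguishing (d) from (e), and the lifting to modulus $30$ via $q=31$ in (f)) is a sound account of the underlying computation that \cite{BM} carries out.
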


Furthermore, Proposition~4.5 of \cite{BM} can be extended as follows.

\begin{proposition}
\label{p2}
Let $p$ be an irregular prime dividing $a$. Assume that for every irregular pair $(r,p)$ there exists a positive integer $\ell_r$ such that for every\linebreak $j=0,1,\dots,\ell_r-1$ there is a helpful pair $(t_j,q_j)_a$ with $(q_j-1)\mid \ell_r(p-1)$ and $t_j\equiv r+j(p-1)\pmod{q_j-1}$. Then both $aS_k(m)=m^k$ and $aT_k(m)=(2m+1)^k$ have no solutions.
\end{proposition}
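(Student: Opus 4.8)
The plan is to follow the proof of Proposition~4.5 of \cite{BM} essentially verbatim, now that the two key ingredients are in place: Lemma~\ref{l9} is the precise analogue of \cite[Lemma~4.4]{BM}, and Lemma~\ref{l8} guarantees that $aS_k(m)=m^k$ and $aT_k(m)=(2m+1)^k$ may be attacked with the \emph{same} helpful pairs. I would prove the statement for $aT_k(m)=(2m+1)^k$ in full and then observe that the conclusion for $aS_k(m)=m^k$ is exactly Proposition~4.5 of \cite{BM}.

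First I would bring the hypotheses of Theorem~\ref{t3} into force. Suppose $(m,k)$ solves $aT_k(m)=(2m+1)^k$ with $m>1$; by the discussion opening Section~\ref{s3}, $k$ is then even. Because $p\mid a$, the integer equation $aT_k(m)=(2m+1)^k$ gives $p\mid(2m+1)^k$, whence $p\mid 2m+1$. Thus $p$ is a prime dividing $2m+1$, and Theorem~\ref{t3}(d) applies: $k\equiv r\pmod{p-1}$ for some irregular pair $(r,p)$. (The excluded case $m=1$ forces $a=3^k$, so $p=3$; but $3$ is regular, contradicting the irregularity of $p$.)

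With this irregular pair $(r,p)$ fixed, I would invoke the covering hypothesis. Let $\ell_r$ be the integer it provides. As $j$ ranges over $0,1,\dots,\ell_r-1$, the classes $r+j(p-1)\pmod{\ell_r(p-1)}$ run through all residues modulo $\ell_r(p-1)$ that are congruent to $r$ modulo $p-1$; since $k\equiv r\pmod{p-1}$, some index $j$ satisfies $k\equiv r+j(p-1)\pmod{\ell_r(p-1)}$. For this $j$ there is a helpful pair $(t_j,q_j)_a$ with $(q_j-1)\mid\ell_r(p-1)$ and $t_j\equiv r+j(p-1)\pmod{q_j-1}$. Reducing $k\equiv r+j(p-1)\pmod{\ell_r(p-1)}$ modulo $q_j-1$---legitimate since $(q_j-1)\mid\ell_r(p-1)$---yields $k\equiv t_j\pmod{q_j-1}$, which contradicts Lemma~\ref{l9}. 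Hence $aT_k(m)=(2m+1)^k$ has no solutions.

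The whole argument is really an assembly of Theorem~\ref{t3} and Lemma~\ref{l9}, so I do not anticipate a serious obstacle; the only points demanding care are the reduction $p\mid a\Rightarrow p\mid 2m+1$, which is what lets Theorem~\ref{t3} (stated for primes dividing $2m+1$) be applied to a prime dividing $a$, and the verification that the classes $r+j(p-1)$ genuinely cover every admissible residue of $k$ modulo $\ell_r(p-1)$. Finally, for $aS_k(m)=m^k$ the identical chain of reasoning applies with \cite[Lemma~4.4]{BM} in place of Lemma~\ref{l9} and Theorem~3.11 of \cite{BM} in place of Theorem~\ref{t3}; this is precisely the content of Proposition~4.5 of \cite{BM}.
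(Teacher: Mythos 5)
Your proposal is correct and follows essentially the same route the paper intends: the paper gives no separate written proof of Proposition~\ref{p2}, but the paragraph preceding it describes exactly this covering argument (reduce to $p\mid 2m+1$ via $p\mid a$, invoke Theorem~\ref{t3}(d) to pin $k\equiv r\pmod{p-1}$, then eliminate each class $k\equiv r+j(p-1)\pmod{\ell_r(p-1)}$ with the helpful pair $(t_j,q_j)_a$ and Lemma~\ref{l9}), with the $aS_k(m)=m^k$ case being Proposition~4.5 of \cite{BM}. Your extra care with the $m=1$ and odd-$k$ cases is a welcome tightening but not a departure from the paper's argument.
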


The procedure described in Proposition~\ref{p2} has recently been used to show \cite{BM} that if $2\le a\le 1500$ and the prime divisors of $a$ are all irregular, then the equation $aS_k(m)=m^k$ has no solutions (the corresponding helpful pairs are listed in Table~3 of \cite{BM}). As an immediate consequence of this, we obtain the following.

\begin{proposition}
\label{p3}
If $2\le a\le 1500$ and all prime divisors of $a$ are irregular, then the equation $aT_k(m)=(2m+1)^k$ has no solutions.
\end{proposition}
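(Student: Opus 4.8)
The plan is to deduce Proposition~\ref{p3} directly from the corresponding result for the Kellner--Erd\H{o}s--Moser equation together with the transfer mechanism established in Lemma~\ref{l8}. The key point is that the whole machinery of Proposition~\ref{p2} hinges on a \emph{single} pool of helpful pairs that serves both equations simultaneously, so nothing substantively new has to be proved here.

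First I would recall what is already known about $aS_k(m)=m^k$. By \cite{BM} (whose helpful pairs are tabulated in Table~3 there), whenever $2\le a\le 1500$ and every prime divisor of $a$ is irregular, the hypotheses of Proposition~\ref{p2} are met: for each irregular prime $p\mid a$ and each irregular pair $(r,p)$ one can exhibit an integer $\ell_r$ and, for every $j=0,1,\dots,\ell_r-1$, a helpful pair $(t_j,q_j)_a$ with $(q_j-1)\mid\ell_r(p-1)$ and $t_j\equiv r+j(p-1)\pmod{q_j-1}$. These pairs were constructed precisely to force $aS_k(m)=m^k$ to have no solutions.

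Next I would invoke Lemma~\ref{l8}: a potentially helpful pair $(t,q)_a$ is a helpful pair of the first kind (the flavor relevant to $aS_k(m)=m^k$) if and only if it is a helpful pair of the second kind (the flavor relevant to $aT_k(m)=(2m+1)^k$). Hence the very same list of pairs $(t_j,q_j)_a$ that verifies the hypothesis of Proposition~\ref{p2} for $S_k$ also verifies it for $T_k$. Applying the $T_k$-half of Proposition~\ref{p2} then yields that $aT_k(m)=(2m+1)^k$ has no solutions under the stated conditions. Since we have already reduced to the case $m>1$ and even $k$ at the start of Section~\ref{s3} (the cases $m=1$ and odd $k$ being handled there), this completes the argument.

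I do not anticipate a genuine obstacle, as Proposition~\ref{p3} is advertised as an ``immediate consequence.'' The only point requiring care is bookkeeping: one must check that the pairs in Table~3 of \cite{BM} really are \emph{potentially} helpful pairs for the given values of $a$ (i.e.\ $q\nmid a$ and, when $q$ is irregular, $(t,q)$ is not an irregular pair), so that Lemma~\ref{l8} applies verbatim. This is guaranteed by their construction in \cite{BM}, so the transfer goes through without modification.
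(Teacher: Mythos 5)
Your proposal is correct and follows exactly the paper's route: the paper likewise observes that the helpful pairs of Table~3 of \cite{BM}, which verify the hypotheses of Proposition~\ref{p2} for $aS_k(m)=m^k$, serve equally for $aT_k(m)=(2m+1)^k$ via Lemma~\ref{l8}, making Proposition~\ref{p3} an immediate consequence. No substantive difference from the paper's argument.
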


Combining Proposition~\ref{p3} with Corollary~\ref{c2} and an obvious fact that for even $a$ the equation $aT_k(m)=(2m+1)^k$ has no solutions, we deduce Theorem~\ref{t1}.

If an irregular prime $p$ divides $a$, then we have a reasonably good chance to establish the unsolvability of the equations $aS_k(m)=m^k$ and $aT_k(m)=(2m+1)^k$ by applying Proposition~\ref{p2}. Unfortunately, it is not the case when $a=1$. However, helpful pairs can still be used to develop some divisibility properties of positive integers $k$ and $m$ satisfying either $S_k(m)=m^k$ or $T_k(m)=(2m+1)^k$. For the original Erd\H{o}s--Moser equation $S_k(m)=m^k$, Moree, te Riele and Urbanowicz~\cite{MRU2} have proved by means of helpful pairs that $2^8\cdot 3^5\cdot 5^4\cdot 7^3\cdot 11^2\cdot 13^2\cdot 17^2\cdot 19^2\cdot\prod_{23\le p< 200} p$\linebreak divides $k$ and every prime divisor of $m$ is greater than $10000$, provided that $k>1$ (see also \cite{MRU1} for more details). Kellner~\cite{K1} has shown that also all primes ${200<p<1000}$ divide $k$. Combining these results with the above remarks, we deduce Theorem~\ref{t2}.

\begin{remark}
Although the theoretical results of \cite{MRU1} and \cite{MRU2} are stated and proved in terms of good pairs (of which the helpful pairs are a special case), the numerical results mentioned above have been obtained  using only helpful pairs.
\end{remark}

\section{Concluding Remarks}
\label{s5}
It is not difficult to find some other similarities between the Kellner--Erd\H{o}s--Moser equation $aS_k(m)=m^k$ and the equation $aT_k(m)=(2m+1)^k$. For example, using Lemma~10.2 of \cite{BM} and the relation $T_k(m)=S_k(2m)-2^kS_k(m)$ one can readily deduce the following result (cf. \cite[Theorem~10.1]{BM}).
\begin{theorem}
\label{t4}
Suppose that $aT_k(m)=(2m+1)^k$ with $m>1$. Then
$$
\ord_2(am-1)\begin{cases}
=2+\ord_2 k & \text{if $a\equiv 1\!\!\pmod{4}$},\\
\ge 3+\ord_2 k & \text{if $a\equiv 3\!\!\pmod{4}$}.
\end{cases}
$$
\end{theorem}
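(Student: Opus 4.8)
The plan is to follow the route indicated after the statement: rewrite the equation through the relation $T_k(m)=S_k(2m)-2^kS_k(m)$ and feed the $2$-adic information on power sums supplied by \cite[Lemma~10.2]{BM} into it, exactly as in the proof of \cite[Theorem~10.1]{BM}. In practice, however, the $2^kS_k(m)$ term is not always negligible modulo the relevant power of $2$ (e.g. when $k=2$, so that $4S_2(m)$ survives modulo $16$), so I would instead carry out a direct $2$-adic analysis of $T_k(m)$ itself; this avoids tracking two power sums at once and is what I describe below.

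First I would record the reductions. By Section~\ref{s3} we may assume $k$ is even and $m>1$, and since $(2m+1)^k$ is odd, both $a$ and $T_k(m)$ are odd. Put $s:=\ord_2 k$, so $s\ge 1$. Because $k$ is even, each odd base satisfies $(2j-1)^k\equiv 1\pmod 8$, whence $T_k(m)\equiv m\pmod 8$ and therefore
$$
am\equiv aT_k(m)=(2m+1)^k\equiv 1\pmod 8 .
$$
In particular $m$ is odd and (using $m^2\equiv1\pmod 8$) $a\equiv m\pmod 4$, so the cases $a\equiv 1$ and $a\equiv 3\pmod 4$ correspond precisely to $m\equiv 1$ and $m\equiv 3\pmod 4$.

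Next I would isolate the quantity of interest. Writing $C:=T_k(m)-m=\sum_{j=2}^{m}\bigl((2j-1)^k-1\bigr)$ and $D:=(2m+1)^k-1$, the equation becomes $am-1=D-aC$, so it suffices to compare $\ord_2 D$ with $\ord_2(aC)=\ord_2 C$. The lifting-the-exponent lemma for $p=2$ gives, for $2\le j\le m$,
$$
\ord_2\bigl((2j-1)^k-1\bigr)=\ord_2(2j-2)+\ord_2(2j)+\ord_2 k-1=s+1+\ord_2\bigl(j(j-1)\bigr),
$$
and likewise $\ord_2 D=\ord_2(2m)+\ord_2(2m+2)+s-1=s+1+\ord_2(m+1)$ since $m$ is odd. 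The crux is then a parity cancellation in $C$: modulo $2^{s+3}$ only the indices with $\ord_2(j(j-1))=1$, i.e. $j\equiv 2,3\pmod 4$, survive, and each contributes $2^{s+2}\pmod{2^{s+3}}$; one checks that for odd $m$ the number $\nu$ of such $j$ in $[2,m]$ is always even (it equals $2t$ when $m=4t+1$ and $2t+2$ when $m=4t+3$). Hence $C\equiv 2^{s+2}\nu\equiv 0\pmod{2^{s+3}}$, i.e. $\ord_2 C\ge s+3$.

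Finally I would assemble the cases. If $m\equiv 1\pmod 4$ (equivalently $a\equiv 1\pmod 4$), then $\ord_2 D=s+2<s+3\le\ord_2(aC)$, so $\ord_2(am-1)=\ord_2 D=s+2=2+\ord_2 k$. If $m\equiv 3\pmod 4$ (equivalently $a\equiv 3\pmod 4$), then $\ord_2(m+1)\ge 2$ gives $\ord_2 D\ge s+3$, and together with $\ord_2(aC)\ge s+3$ this yields $\ord_2(am-1)\ge s+3=3+\ord_2 k$. The step I expect to be the main obstacle is the parity cancellation establishing $\ord_2 C\ge s+3$ rather than the a priori bound $s+2$: it is precisely this extra factor of $2$ that turns the estimate into an equality in the first case and delivers the clean bound in the second, and verifying it requires the exact leading $2$-adic coefficients from the lifting-the-exponent computation together with the count of indices $j\equiv 2,3\pmod 4$.
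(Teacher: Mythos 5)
Your proof is correct, but it follows a genuinely different route from the one the paper intends. The paper gives no written-out proof: it only indicates that Theorem~\ref{t4} follows from the decomposition $T_k(m)=S_k(2m)-2^kS_k(m)$ together with the $2$-adic information on $S_k$ supplied by Lemma~10.2 of \cite{BM}, mirroring the proof of Theorem~10.1 there. You instead work directly with $T_k(m)$: after reducing to even $k$ and odd $a$, $m$ with $am\equiv 1\pmod 8$, you write $am-1=D-aC$ with $C=T_k(m)-m$ and $D=(2m+1)^k-1$, compute $\ord_2$ of each summand of $C$ and of $D$ by lifting the exponent, and then exploit the parity of the number of indices $j\equiv 2,3\pmod 4$ in $[2,m]$ to upgrade the a priori bound $\ord_2 C\ge s+2$ to $\ord_2 C\ge s+3$ (with $s=\ord_2 k$); the two cases of the theorem then fall out of whether $\ord_2(m+1)$ equals $1$ or is at least $2$. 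I checked the details — the LTE valuations, the exactness of $\ord_2=s+2$ for the surviving terms, the evenness of the count $\nu$ for both $m=4t+1$ and $m=4t+3$, and the final case split — and they are all sound. What the paper's route buys is economy: it reuses an existing lemma and makes the statement visibly parallel to \cite[Theorem~10.1]{BM}, at the price of tracking the $2$-adic behaviour of two power sums at once (your observation that $2^kS_k(m)$ is not negligible modulo $2^{\ord_2 k+3}$ when $k=2$ is a fair warning that this bookkeeping cannot be skipped). What your route buys is self-containedness: it needs nothing beyond the lifting-the-exponent lemma and an elementary counting argument, and it localizes exactly where the extra factor of $2$ comes from.
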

Furthermore, one can easily show that if $(m_1,k_1)$ and $(m_2,k_2)$ are two distinct solutions of $aT_k(m)=(2m+1)^k$, then $m_1\ne m_2$ and $k_1\ne k_2$ (cf. \cite[Proposition~1.7]{BM}).

By employing the same type of argument as in \cite{BM} and in Sections~\ref{s3} and \ref{s4} of this paper, one can derive some divisibility properties of integers $k$ and $m$ satisfying either $S_k(m)=am^k$ or  $T_k(m)=a(2m+1)^k$, where $a$ is a fixed positive integer (see \cite{M2} and \cite{M4} for some results on $S_k(m)=am^k$). In particular, we are able to prove analogues of Theorem~\ref{t3} and Proposition~\ref{p1}, however, so far we cannot make any conclusions about the unsolvability of these equations.

\section*{Acknowledgement}
The author thanks an anonymous referee for helpful suggestions.

\end{document}